\documentclass[review,3p,times,sort&compress]{elsarticle}
\usepackage{amssymb}
\usepackage{graphicx,fancyhdr}
\usepackage{multirow}
\usepackage{dsfont}
\usepackage{subfigure}
\usepackage{bbm}
\usepackage{booktabs}
\usepackage{amsthm}
\usepackage{bbm,bm}
\DeclareMathAlphabet{\mathpzc}{OT1}{pzc}{m}{it}
\usepackage{bbm}
\usepackage{dsfont}
\usepackage{yfonts}
\usepackage{amsmath}
\usepackage{amssymb}
\usepackage{amsfonts}
\usepackage{mathrsfs,float}
\usepackage{color}
\usepackage[colorlinks, citecolor=blue]{hyperref}

\makeatletter

\journal{}

\begin{document}
	
	\begin{frontmatter}
		
		
		\newtheorem{thm}{Theorem}[section]
		\newtheorem{rem}[thm]{Remark}
		\newtheorem{ex}{Example}
		\newtheorem{case}{Case}
		\newtheorem{pro}[thm]{Proposition}
		\newtheorem{defi}[thm]{Definition}
		\newtheorem{ass}{Assumption}
		\newtheorem{lem}[thm]{Lemma}
	\newtheorem{corollary}[thm]{Corollary}
\newtheorem{assumption}[thm]{Assumption}
		\newproof{pf}{Proof}
		\newproof{pot}{Proof of Theorem \ref{thm2}}
		\title{High-precision quadrature via local Fourier extension: analytic integration, uniform sampling, and correction for piecewise smooth integrands}
		
		\author[1]{Xinran Liu}
		\author[1]{Zhenyu Zhao}
\author[1]{Benxue Gong}
		\address[1]{School of Mathematics and Statistics, Shandong University of Technology, Zibo, 255049, China}
	\begin{abstract}
We propose a high-precision numerical quadrature framework based on local Fourier extension (LFE) approximations. The method constructs, on each subinterval, a truncated-SVD stabilized local Fourier continuation of the integrand on an extended periodic domain, and then evaluates the integral \emph{analytically} from the resulting Fourier coefficients. Under uniform sampling, the discrete LFE matrix and its TSVD factors are precomputed once and reused across all windows, yielding an efficient offline/online implementation that remains compatible with classical composite rules.

We provide an error bound that reduces the quadrature error to the LFE approximation error and derive algebraic convergence rates for Sobolev-regular integrands. Numerical experiments demonstrate that, on smooth functions, the proposed quadrature reaches near machine precision with substantially fewer nodes than the composite Simpson rule. The advantage persists for oscillatory and variable-frequency integrands and becomes more pronounced for nonuniform phase structures.

For continuous piecewise smooth integrands, we develop a correction strategy driven by coefficient-energy outliers to identify singularity-containing windows, followed by a localized procedure that brackets the singular point within one grid cell and corrects only the affected window contribution. The corrected quadrature restores near-spectral accuracy in the reported tests, including cases where the singularity is not aligned with the window endpoints.
\end{abstract}
		
		\begin{keyword}
			Local Fourier extension\sep Numerical integration\sep edge detection\sep piecewise smooth
		\end{keyword}
		
	\end{frontmatter}

\section{Introduction}

Numerical integration is a fundamental task in scientific computing and mathematical modeling, arising whenever integrals must be evaluated from analytic expressions, numerical solutions, or discrete data \cite{Yang2005}.
It plays a central role in applications ranging from differential equations and inverse problems to signal processing, data analysis, and computational physics \cite{dong2020stability,kong2021application,spiteri2020efficient}.
In many realistic settings, integrals cannot be computed in closed form or to sufficient accuracy by direct evaluation, making efficient and reliable numerical quadrature indispensable.

Most classical quadrature rules are built on polynomial or piecewise-polynomial approximation of the integrand, followed by analytic integration of the approximant.
Representative examples include Newton--Cotes formulas, Gaussian quadrature, and spline-based schemes \cite{Busenberg1986,magalhaes2021new,ciobanu2022fast}.
These methods are highly effective when the integrand is smooth and moderately varying.
However, their performance can deteriorate for highly oscillatory, non-periodic, or locally structured integrands, where polynomial-based approximations often demand dense sampling or high-order constructions, increasing computational cost and reducing robustness.
Moreover, although Gaussian quadrature can achieve very high algebraic accuracy, it relies on specially designed nonuniform nodes, whereas in many applications the available data are given only on uniformly spaced grids (e.g., from measurements or from PDE solvers with prescribed spatial discretizations), so Gaussian-type rules cannot be applied directly without additional interpolation or resampling.

A related line of work aims at improving computational efficiency through fast algorithms and adaptive strategies \cite{rao2014adjoint,ciobanu2022fast}.
These approaches can reduce arithmetic complexity and often improve practical performance, but they typically retain the same underlying approximation paradigm and may still face difficulties when the integrand exhibits complex or spatially varying frequency content.

Spectral representations offer an alternative viewpoint.
Fourier-based approximations are naturally suited for oscillatory functions and can deliver very high accuracy with relatively few degrees of freedom \cite{luchini1994fourier,iserles2004numerical}.
Classical Fourier methods, however, rely on periodicity and may suffer from boundary artifacts on non-periodic intervals.
Fourier extension techniques address this limitation by embedding a non-periodic function into a larger periodic domain, enabling Fourier-type spectral approximation without requiring periodicity \cite{huybrechs2010fourier,matthysen2018function,zhao2025fast}.
These ideas are closely related to high-accuracy quadrature on nonuniform nodes: for example, Clenshaw--Curtis quadrature is a classical spectral-type scheme, and its convergence behavior for functions of limited regularity has been studied in detail \cite{xiang2012convergence}.

More recently, \emph{localized} Fourier extension methods have been developed by combining domain decomposition with local spectral representations \cite{Zhao2026local}.
By transferring each subinterval to a fixed reference domain, localized constructions allow different regions to accommodate local frequency characteristics while preserving stability and approximation accuracy.
These features suggest a natural route to quadrature schemes that remain compatible with uniform sampling and can better handle oscillatory or nonuniform phase structures.

Motivated by these advances, we develop a numerical integration framework based on local Fourier extension (LFE) approximations.
The proposed strategy constructs, on each subinterval, a stabilized local Fourier continuation of the integrand on an extended periodic domain, and then evaluates the integral \emph{analytically} from the resulting Fourier coefficients.
This yields a direct connection between local spectral reconstruction and numerical quadrature.
In contrast to polynomial-based composite rules on uniform grids, the method is intrinsically adapted to oscillatory and variable-frequency integrands and can reach near machine precision with relatively few sampling points.
An important feature is that the method operates directly on equispaced nodes: under uniform sampling, the discrete LFE matrix (and its truncated-SVD factors) can be precomputed once and reused across all windows, leading to an efficient offline/online implementation that remains compatible with classical composite rules.

In addition to smooth and oscillatory integrands, practical computations often involve \emph{continuous piecewise smooth} functions, where the function is continuous but its derivative contains localized singularities.
Such localized nonsmoothness may strongly perturb local spectral reconstructions and deteriorate quadrature accuracy if it occurs inside a window.
To address this issue, we further develop a singular-point detection and correction strategy within the LFE framework.
The method identifies singularity-containing windows through abnormal growth of a coefficient-energy indicator, localizes the singular point within one grid cell, and corrects only the affected window contribution by a piecewise LFE integration.
This extends the applicability of the proposed quadrature beyond globally smooth integrands and enables near-spectral accuracy to be recovered for continuous piecewise smooth functions, including cases where the singularity is not aligned with window endpoints.

The remainder of this paper is organized as follows.
Section~\ref{SEC2} introduces the proposed numerical integration framework based on multi-interval Fourier extension, including the local approximation construction, the derivation of the quadrature formula, and the associated convergence analysis.
Section~\ref{SEC3} discusses the numerical implementation under uniform sampling, with particular emphasis on matrix precomputation, singular-point detection, and correction for continuous piecewise smooth integrands.
Section~\ref{SEC4} presents numerical experiments for smooth, oscillatory, and piecewise smooth test functions, including a comparison with Clenshaw--Curtis quadrature.
Finally, concluding remarks are given in the last section.

\section{A numerical integration method based on multi-interval Fourier extension}\label{SEC2}

In this section, we present the proposed numerical quadrature framework
based on the local Fourier extension (LFE) approximation.
The construction consists of three components:
(i) local Fourier extension approximation of the integrand,
(ii) analytical evaluation of the integral from the resulting local Fourier
coefficients, and
(iii) convergence analysis of the induced quadrature rule.

\subsection{Local Fourier extension approximation}

We briefly recall the local Fourier extension method introduced in
\cite{Zhao2026local}, which serves as the approximation foundation of the
present quadrature scheme.

Let $I=[a,b]$ and let
\[
a=a_0<a_1<\cdots<a_K=b
\]
be a partition of $I$ into $K$ subintervals
\[
I_k=[a_{k-1},a_k],\qquad k=1,\ldots,K.
\]
For a given function $f$ on $I$, we denote its restriction to $I_k$ by
\[
f_k(x)=f(x),\qquad x\in I_k.
\]

Following the Fourier extension framework, each subinterval is mapped onto
the reference interval
\[
\Lambda=[0,2\pi/T],\qquad T>1,
\]
through the scaling
\[
g_k(t)=f_k(a_{k-1}+s_k t),\qquad
s_k=\frac{T}{2\pi}(a_k-a_{k-1}).
\]
Thus, each local approximation problem is transferred from $I_k$ to the
same fixed interval $\Lambda$, which is essential for the reuse of the
discrete FE matrix in the numerical implementation.

Let
\[
\phi_\ell(t)=e^{i\ell t},\qquad |\ell|\le N,
\]
and define
\[
\Phi_N=\mathrm{span}\{\phi_\ell:\ |\ell|\le N\}.
\]
We introduce the finite-rank synthesis operator
\[
\mathcal{F}_N:\mathbb{C}^{2N+1}\rightarrow \Phi_N,\qquad
{\bf c}_N=\{c_\ell\}_{|\ell|\le N}\mapsto \sum_{\ell=-N}^{N}c_\ell\phi_\ell.
\]
Let $(\sigma_j,v_j,u_j)$ be a singular system of $\mathcal{F}_N$.
For a prescribed tolerance $\epsilon>0$, the truncated singular value
decomposition (TSVD) coefficient vector is defined by
\[
{\bf c}_{N}^{\epsilon,k}
=
\sum_{\sigma_j>\epsilon}
\frac{\langle g_k,u_j\rangle}{\sigma_j}\,v_j.
\]
The corresponding local Fourier extension approximation of $g_k$ is
\[
Q_N^\epsilon g_k(t)
=
\mathcal{F}_N{\bf c}_{N}^{\epsilon,k}
=
\sum_{\ell=-N}^{N}c_{\ell}^{\epsilon,k}\phi_\ell(t).
\]

The global LFE approximation of $f$ on $I$ is then defined piecewise by
\[
(P_{N,K}^{\epsilon}f)(x)
=
(Q_N^\epsilon g_k)\!\left(\frac{x-a_{k-1}}{s_k}\right),
\qquad x\in I_k,\quad k=1,\ldots,K.
\]

The following estimate, inherited from \cite{Zhao2026local}, describes the
local approximation error.

\begin{lem}\cite{Zhao2026local}\label{lemma1}
For each $k=1,\ldots,K$, the local approximation error satisfies
\begin{equation}\label{localapproerror}
\|g_k-Q_N^\epsilon g_k\|_{L^2(\Lambda)}
\le
\inf\Big\{
\|g_k-\mathcal{F}_N{\bf c}_N\|_{L^2(\Lambda)}
+\epsilon\|{\bf c}_N\|_{\ell^2}
:\ {\bf c}_N\in\mathbb{C}^{2N+1}
\Big\}.
\end{equation}
\end{lem}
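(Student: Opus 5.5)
This is the standard TSVD regularization estimate, and I would prove it directly from the singular system of $\mathcal{F}_N$. Here $\mathcal{F}_N$ maps $\mathbb{C}^{2N+1}$ into $L^2(\Lambda)$. It has finite rank, so it has a singular system $(\sigma_j,v_j,u_j)$, $j=1,\dots,2N+1$, with the following properties: $\{v_j\}$ is an orthonormal basis of $\mathbb{C}^{2N+1}$, $\{u_j\}$ is orthonormal in $L^2(\Lambda)$, and $\mathcal{F}_N v_j=\sigma_j u_j$. Write $P$ for the orthogonal projection onto $\mathrm{span}\{u_j:\sigma_j>\epsilon\}$. Then
\[
Q_N^\epsilon g_k=\mathcal{F}_N{\bf c}_N^{\epsilon,k}=\sum_{\sigma_j>\epsilon}\langle g_k,u_j\rangle u_j=Pg_k ,
\]
so the left-hand side of \eqref{localapproerror} equals $\|(I-P)g_k\|_{L^2(\Lambda)}$.

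Next, fix an arbitrary ${\bf c}_N\in\mathbb{C}^{2N+1}$ and split
\[
(I-P)g_k=(I-P)(g_k-\mathcal{F}_N{\bf c}_N)+(I-P)\mathcal{F}_N{\bf c}_N .
\]
Since $I-P$ is an orthogonal projection, the first term has norm at most $\|g_k-\mathcal{F}_N{\bf c}_N\|_{L^2(\Lambda)}$. For the second term, expand ${\bf c}_N=\sum_j\langle{\bf c}_N,v_j\rangle v_j$. This gives
\[
(I-P)\mathcal{F}_N{\bf c}_N=\sum_{\sigma_j\le\epsilon}\sigma_j\langle{\bf c}_N,v_j\rangle u_j .
\]
By orthonormality of the $u_j$ its norm squared is $\sum_{\sigma_j\le\epsilon}\sigma_j^2|\langle{\bf c}_N,v_j\rangle|^2$. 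This is at most $\epsilon^2\sum_j|\langle{\bf c}_N,v_j\rangle|^2=\epsilon^2\|{\bf c}_N\|_{\ell^2}^2$. The triangle inequality then yields
\[
\|g_k-Q_N^\epsilon g_k\|_{L^2(\Lambda)}\le\|g_k-\mathcal{F}_N{\bf c}_N\|_{L^2(\Lambda)}+\epsilon\|{\bf c}_N\|_{\ell^2}.
\]
Taking the infimum over ${\bf c}_N$ gives \eqref{localapproerror}.

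None of these steps is a real obstacle; the argument is routine. The one point needing care is the identification $Q_N^\epsilon g_k=Pg_k$, which requires the $u_j$ to be orthonormal in $L^2(\Lambda)$ and not in some other inner product. The estimate also needs no assumption on $g_k$ beyond $g_k\in L^2(\Lambda)$. The same proof applies verbatim in the discrete setting, where $\mathcal{F}_N$ is replaced by the sampled Fourier-extension matrix and $L^2(\Lambda)$ by a weighted $\ell^2$ norm on the sample points.
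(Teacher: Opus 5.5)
Your proof is correct. The paper does not prove this lemma itself; it quotes it from \cite{Zhao2026local}. Your argument is the standard TSVD estimate behind that result: you identify $Q_N^\epsilon g_k$ with the orthogonal projection $Pg_k$, split $(I-P)g_k=(I-P)(g_k-\mathcal{F}_N{\bf c}_N)+(I-P)\mathcal{F}_N{\bf c}_N$, and bound the second term by $\epsilon\|{\bf c}_N\|_{\ell^2}$ using the discarded singular values.
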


Lemma~\ref{lemma1} shows that the LFE approximation inherits the
best-approximation property up to the regularization term
$\epsilon\|{\bf c}_N\|_{\ell^2}$.
This estimate will be the key tool in the subsequent quadrature error
analysis.

\subsection{Derivation of the quadrature formula}

The central idea of the proposed method is to integrate the local Fourier
extension approximation analytically.

The target integral is
\[
\mathcal{I}(f)=\int_a^b f(x)\,dx.
\]
We define the LFE quadrature approximation by
\[
\mathcal{Q}_{N,K}^{\epsilon}(f)
=
\int_a^b (P_{N,K}^{\epsilon}f)(x)\,dx.
\]
Using the piecewise definition of $P_{N,K}^{\epsilon}f$, we obtain
\[
\mathcal{Q}_{N,K}^{\epsilon}(f)
=
\sum_{k=1}^{K}
\int_{I_k}
(Q_N^\epsilon g_k)\!\left(\frac{x-a_{k-1}}{s_k}\right)\,dx.
\]
Applying the change of variables
\[
x=a_{k-1}+s_k t,\qquad dx=s_k\,dt,
\]
yields
\[
\mathcal{Q}_{N,K}^{\epsilon}(f)
=
\sum_{k=1}^{K}
s_k
\int_{\Lambda}
Q_N^\epsilon g_k(t)\,dt.
\]

Substituting the Fourier expansion
\[
Q_N^\epsilon g_k(t)=\sum_{|\ell|\le N}c_\ell^{\epsilon,k}\phi_\ell(t)
\]
gives
\[
\mathcal{Q}_{N,K}^{\epsilon}(f)
=
\sum_{k=1}^{K}
s_k
\sum_{|\ell|\le N}
c_\ell^{\epsilon,k}\,\omega_\ell,
\]
where
\[
\omega_\ell
=
\int_0^{2\pi/T}e^{i\ell t}\,dt
=
\begin{cases}
\dfrac{2\pi}{T}, & \ell=0,\\[6pt]
\dfrac{e^{i\ell 2\pi/T}-1}{i\ell}, & \ell\neq 0.
\end{cases}
\]

Therefore, once the local coefficient vectors
${\bf c}_{N}^{\epsilon,k}$ are computed, the quadrature value is obtained by
a direct weighted summation of the Fourier coefficients.
This analytic evaluation is the fundamental reason why the method can
achieve very high accuracy on uniform samples.

\subsection{Convergence analysis}

We now establish the convergence properties of the proposed quadrature rule.

\begin{thm}[Quadrature error is bounded by the LFE approximation error]
\label{thm:main_bound}
For any $f\in L^2(a,b)$,
\[
\left|
\int_a^b f(x)\,dx-\mathcal{Q}_{N,K}^{\epsilon}(f)
\right|
\le
\sqrt{b-a}\,
\|f-P_{N,K}^{\epsilon}f\|_{L^2(a,b)}.
\]
\end{thm}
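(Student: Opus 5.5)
The argument reduces to linearity of the integral followed by the Cauchy--Schwarz inequality on $(a,b)$.

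First, by the definition of $\mathcal{Q}_{N,K}^{\epsilon}(f)$ as $\int_a^b (P_{N,K}^{\epsilon}f)(x)\,dx$, linearity of the integral gives
\[
\int_a^b f(x)\,dx-\mathcal{Q}_{N,K}^{\epsilon}(f)=\int_a^b \bigl(f-P_{N,K}^{\epsilon}f\bigr)(x)\,dx .
\]
This requires $P_{N,K}^{\epsilon}f$ to be integrable on $(a,b)$. On each $I_k$ it is the rescaled restriction of a trigonometric polynomial in $\Phi_N$, so it is bounded and piecewise smooth. The coefficients $\langle g_k,u_j\rangle$ are well defined because $f\in L^2(a,b)$ implies $g_k\in L^2(\Lambda)$. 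The endpoint values at the $a_k$ form a null set and do not matter.

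Next, writing the integral as the $L^2$ inner product of $e:=f-P_{N,K}^{\epsilon}f$ with the constant function $1$, Cauchy--Schwarz gives
\[
\Bigl|\int_a^b e(x)\,dx\Bigr|\le \|1\|_{L^2(a,b)}\,\|e\|_{L^2(a,b)}=\sqrt{b-a}\,\|f-P_{N,K}^{\epsilon}f\|_{L^2(a,b)},
\]
which is exactly the claimed bound.

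There is no real obstacle here. The only step needing care is the bookkeeping that $P_{N,K}^{\epsilon}f\in L^2(a,b)$, so that $e$ is square integrable and the right-hand side is finite and meaningful.

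As an optional remark tying the bound to Lemma~\ref{lemma1}, the change of variables $x=a_{k-1}+s_k t$ gives
\[
\|f-P_{N,K}^{\epsilon}f\|_{L^2(a,b)}^2=\sum_k s_k\,\|g_k-Q_N^\epsilon g_k\|_{L^2(\Lambda)}^2 .
\]
This is what later converts the theorem into convergence rates, but it is not needed for the statement itself.
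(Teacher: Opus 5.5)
Your proof is correct and follows the paper's own argument: both rewrite the error as $\int_a^b (f-P_{N,K}^{\epsilon}f)\,dx$ using the definition of $\mathcal{Q}_{N,K}^{\epsilon}$, then apply Cauchy--Schwarz against the constant function $1$. Your extra check that $P_{N,K}^{\epsilon}f\in L^2(a,b)$ (on each $I_k$ it is a rescaled trigonometric polynomial) adds rigor the paper leaves implicit.
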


\begin{pf}
By definition of $\mathcal{Q}_{N,K}^{\epsilon}(f)$,
\[
\int_a^b f(x)\,dx-\mathcal{Q}_{N,K}^{\epsilon}(f)
=
\int_a^b \bigl(f(x)-P_{N,K}^{\epsilon}f(x)\bigr)\,dx.
\]
The result follows immediately from the Cauchy--Schwarz inequality.
\end{pf}

Theorem~\ref{thm:main_bound} reduces the quadrature error to the
approximation error of the LFE reconstruction.
Hence, any convergence estimate for $P_{N,K}^{\epsilon}f$
directly yields a corresponding estimate for
$\mathcal{Q}_{N,K}^{\epsilon}(f)$.

\begin{thm}[Algebraic convergence]
\label{thm:Hp}
Assume that $f\in H^p(a,b)$ with $p\ge 1$.
Then there exists a constant $C>0$, independent of $N$ and $K$, such that
\[
\left|
\int_a^b f(x)\,dx-\mathcal{Q}_{N,K}^{\epsilon}(f)
\right|
\le
C(b-a)\,
N^{-p}
\max_{1\le k\le K}
\left(
s_k^{\,p-\frac12}\|f^{(p)}\|_{L^2(I_k)}
\right)
+\mathcal{O}(\epsilon).
\]
\end{thm}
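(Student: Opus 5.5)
We combine Theorem~\ref{thm:main_bound} with a windowwise application of Lemma~\ref{lemma1} and a scaling argument.

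\textbf{Step 1: reduce to the mapped windows.} By Theorem~\ref{thm:main_bound} it suffices to bound $\|f-P_{N,K}^{\epsilon}f\|_{L^2(a,b)}$. Because $P_{N,K}^\epsilon f$ is defined window by window, the change of variables $x=a_{k-1}+s_k t$ gives
\[
\|f-P_{N,K}^{\epsilon}f\|_{L^2(a,b)}^2=\sum_{k=1}^K s_k\,\|g_k-Q_N^\epsilon g_k\|_{L^2(\Lambda)}^2 .
\]
Each local term is controlled by Lemma~\ref{lemma1}, which leaves a best-approximation problem for $g_k$ in $\Phi_N$ on $\Lambda$ plus the penalty $\epsilon\|{\bf c}_N\|_{\ell^2}$.

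\textbf{Step 2: the key approximation estimate on the reference domain.} We need: for $g\in H^p(\Lambda)$ there are coefficients ${\bf c}_N$ with
\[
\|g-\mathcal F_N{\bf c}_N\|_{L^2(\Lambda)}\le C N^{-p}\|g^{(p)}\|_{L^2(\Lambda)}
\qquad\text{and}\qquad
\|{\bf c}_N\|_{\ell^2}\le C'\|g\|_{H^p(\Lambda)}.
\]
The plan is as follows.
\begin{itemize}
\item Use a fixed bounded extension operator $E:H^p(\Lambda)\to H^p_{\rm per}([0,2\pi])$. This is possible since $\Lambda$ is a proper subinterval of the period because $T>1$. We want $|Eg|_{H^p}\le C\|g\|_{H^p(\Lambda)}$.
\item Take ${\bf c}_N$ to be the truncated Fourier series of $Eg$. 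The standard periodic Jackson estimate then gives $N^{-p}|Eg|_{H^p}$, and Parseval bounds the coefficients.
\end{itemize}

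\textbf{Main obstacle.} The extension bound involves the full norm $\|g\|_{H^p}$, not only the seminorm $\|g^{(p)}\|$. I would first try to remove the lower-order terms by subtracting a polynomial of degree $p-1$; Bramble--Hilbert gives $\|g-\pi g\|_{H^p(\Lambda)}\le C|g|_{H^p(\Lambda)}$. The polynomial itself must then be handled. It is smooth, so its periodic extension error decays faster than any power of $N$, but this needs care. Otherwise those lower-order contributions are simply absorbed into $C$ and the $\mathcal O(\epsilon)$ term, accepting a constant that depends on $f$ through $\|f\|_{H^p}$. This is presumably what the stated "$C$ independent of $N,K$" permits.

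\textbf{Step 3: scaling.} Since $g_k^{(p)}(t)=s_k^p f^{(p)}(a_{k-1}+s_kt)$, we get $\|g_k^{(p)}\|_{L^2(\Lambda)}=s_k^{p-1/2}\|f^{(p)}\|_{L^2(I_k)}$. Combining with Steps 1--2,
\[
\|f-P^\epsilon_{N,K}f\|_{L^2}^2\le \sum_k s_k\bigl(CN^{-p}s_k^{p-1/2}\|f^{(p)}\|_{L^2(I_k)}+\epsilon C'\|g_k\|_{H^p}\bigr)^2 .
\]
Bound each bracket by its maximum over $k$ and use $\sum_k s_k=\frac{T}{2\pi}(b-a)$. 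Taking square roots and multiplying by $\sqrt{b-a}$ yields $C(b-a)N^{-p}\max_k\bigl(s_k^{p-1/2}\|f^{(p)}\|_{L^2(I_k)}\bigr)$, with the $\epsilon$ contributions collected into $\mathcal O(\epsilon)$. The $\epsilon$ terms are uniformly bounded because $\|g_k\|_{H^p}$ is controlled by $\|f\|_{H^p(a,b)}$ for $s_k\le s_{\max}$.
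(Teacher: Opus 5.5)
Your Steps 1 and 3 match the paper's proof. Your bookkeeping via $\sum_k s_k=\frac{T}{2\pi}(b-a)$ is in fact the correct form of the paper's final combination step. The argument breaks at Step 2 for $p\ge 2$, and neither remedy you propose repairs it.

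The seminorm estimate $\inf_{{\bf c}_N}\|g-\mathcal F_N{\bf c}_N\|_{L^2(\Lambda)}\le CN^{-p}\|g^{(p)}\|_{L^2(\Lambda)}$ is false for $p\ge2$. The function $g(t)=t$ has $g^{(p)}=0$, but it is not the restriction to $\Lambda$ of a trigonometric polynomial: by analytic continuation, $t$ would then be bounded on $\mathbb{R}$. Hence $d_N:=\inf_{{\bf c}_N}\|t-\mathcal F_N{\bf c}_N\|_{L^2(\Lambda)}>0$. After your Bramble--Hilbert split, exactly such polynomial parts remain. Their contribution carries lower powers of $s_k$ and no factor $\epsilon$. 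So it cannot be absorbed into $\mathcal O(\epsilon)$. Nor can it be absorbed into $C\max_k s_k^{p-1/2}\|f^{(p)}\|_{L^2(I_k)}$ with a $K$-independent $C$, even one depending on $f$, because that factor vanishes for linear $f$ and decays faster than these terms as $K\to\infty$.

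In fact no argument that first applies Theorem~\ref{thm:main_bound} can succeed. For $f(x)=x$ one has $\|g_k-Q_N^\epsilon g_k\|_{L^2(\Lambda)}\ge s_k d_N$ for every $\epsilon$, since $Q_N^\epsilon g_k\in\Phi_N$ and constants lie in $\Phi_N$. Thus $\sqrt{b-a}\,\|f-P_{N,K}^\epsilon f\|_{L^2(a,b)}$ stays bounded away from zero as $\epsilon\to0$, while the claimed right-hand side is $\mathcal O(\epsilon)$. The paper's proof passes over the same point by citing a ``standard trigonometric approximation estimate on $\Lambda$'' with only the seminorm. You located the crux correctly, but it is not resolved.

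The missing idea is to use orthogonality instead of Cauchy--Schwarz. By definition, $Q_N^\epsilon g=\sum_{\sigma_j>\epsilon}\langle g,u_j\rangle u_j$ is the $L^2(\Lambda)$-orthogonal projection onto $\mathrm{span}\{u_j:\sigma_j>\epsilon\}$. Also $\int_\Lambda h\,dt=\langle h,1\rangle_{L^2(\Lambda)}$. Hence
\[
\int_\Lambda\bigl(g_k-Q_N^\epsilon g_k\bigr)\,dt=\bigl\langle (\mathrm{Id}-Q_N^\epsilon)g_k,\,(\mathrm{Id}-Q_N^\epsilon)1\bigr\rangle_{L^2(\Lambda)},\qquad \|(\mathrm{Id}-Q_N^\epsilon)1\|_{L^2(\Lambda)}\le\epsilon .
\]
The second bound is Lemma~\ref{lemma1} applied to the constant function $1=\phi_0$, with ${\bf c}_N$ the unit vector selecting $\ell=0$. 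The error on $I_k$ equals $s_k\int_\Lambda(g_k-Q_N^\epsilon g_k)\,dt$, and $\|g_k-Q_N^\epsilon g_k\|_{L^2(\Lambda)}\le\|g_k\|_{L^2(\Lambda)}=s_k^{-1/2}\|f\|_{L^2(I_k)}$. Therefore
\[
\Bigl|\int_a^b f(x)\,dx-\mathcal{Q}_{N,K}^{\epsilon}(f)\Bigr|\le\epsilon\sum_{k=1}^K s_k\|g_k-Q_N^\epsilon g_k\|_{L^2(\Lambda)}\le\epsilon\sqrt{\tfrac{T}{2\pi}(b-a)}\,\|f\|_{L^2(a,b)} .
\]
This is already $\mathcal O(\epsilon)$, so the stated inequality follows at once, and only $f\in L^2(a,b)$ is needed. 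In the continuous model of Section~\ref{SEC2}, the $N^{-p}$ term is not needed at all.
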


\begin{pf}
By Theorem~\ref{thm:main_bound}, it suffices to estimate
$\|f-P_{N,K}^{\epsilon}f\|_{L^2(a,b)}$.
Since $P_{N,K}^{\epsilon}f$ is defined piecewise, we have
\[
\|f-P_{N,K}^{\epsilon}f\|_{L^2(a,b)}^2
=
\sum_{k=1}^{K}
\|f-P_{N,K}^{\epsilon}f\|_{L^2(I_k)}^2.
\]
For each $k$,
\[
\|f-P_{N,K}^{\epsilon}f\|_{L^2(I_k)}
=
s_k^{1/2}\|g_k-Q_N^\epsilon g_k\|_{L^2(\Lambda)}.
\]
Applying Lemma~\ref{lemma1} together with the standard trigonometric
approximation estimate on $\Lambda$ yields
\[
\|g_k-Q_N^\epsilon g_k\|_{L^2(\Lambda)}
\le
C N^{-p}\|g_k^{(p)}\|_{L^2(\Lambda)}+\mathcal{O}(\epsilon).
\]
Moreover, by the chain rule,
\[
g_k^{(p)}(t)
=
s_k^p\,f^{(p)}(a_{k-1}+s_k t),
\]
and hence
\[
\|g_k^{(p)}\|_{L^2(\Lambda)}
=
s_k^{\,p-\frac12}\|f^{(p)}\|_{L^2(I_k)}.
\]
Combining the above estimates and using
\[
\|f-P_{N,K}^{\epsilon}f\|_{L^2(a,b)}
\le
\sqrt{b-a}\,
\max_{1\le k\le K}
\|f-P_{N,K}^{\epsilon}f\|_{L^2(I_k)},
\]
we obtain the stated result.
\end{pf}

The above theorem shows that the proposed quadrature inherits the local
approximation properties of the LFE reconstruction.
In particular, the convergence improves as the local windows become
shorter, which is consistent with the main motivation of the multi-interval
construction.

\begin{corollary}[Uniform partition rate]
\label{cor:uniform_partition}
If the partition is uniform, i.e.,
\[
a_k-a_{k-1}=\frac{b-a}{K},\qquad k=1,\ldots,K,
\]
then
\[
\left|
\int_a^b f(x)\,dx-\mathcal{Q}_{N,K}^{\epsilon}(f)
\right|
\le
C(T,p)\,
(b-a)^{p+\frac12}\,
N^{-p}K^{-p}\,
\|f^{(p)}\|_{L^2(a,b)}.
\]
\end{corollary}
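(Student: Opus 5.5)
The plan is to specialize Theorem~\ref{thm:Hp} to the uniform partition and then trade the maximum over windows for the global seminorm $\|f^{(p)}\|_{L^2(a,b)}$.

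With $a_k-a_{k-1}=(b-a)/K$, every scaling factor is the same:
\[
s_k=\frac{T}{2\pi}\,\frac{b-a}{K}=:s .
\]
Hence $s_k^{\,p-\frac12}=\bigl(\tfrac{T}{2\pi}\bigr)^{p-\frac12}(b-a)^{p-\frac12}K^{-p+\frac12}$, which can be pulled out of the maximum. For the remaining factor, I will use the crude bound $\max_k\|f^{(p)}\|_{L^2(I_k)}\le \|f^{(p)}\|_{L^2(a,b)}$.

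Inserting both into Theorem~\ref{thm:Hp} gives a bound
\[
C\bigl(\tfrac{T}{2\pi}\bigr)^{p-\frac12}(b-a)^{p+\frac12}N^{-p}K^{-p+\frac12}\|f^{(p)}\|_{L^2(a,b)}.
\]
This is off from the claim by a factor $K^{1/2}$. The source of the loss is the last step in the proof of Theorem~\ref{thm:Hp}: bounding $\|f-P_{N,K}^\epsilon f\|_{L^2(a,b)}$ by $\sqrt{b-a}\,\max_k$ wastes a factor $\sqrt{K}$ when the windows are of equal length.

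The main step is therefore to redo that combination with the sum, not the maximum. From the proof of Theorem~\ref{thm:Hp} I take the per-window estimate
\[
\|f-P^\epsilon_{N,K}f\|_{L^2(I_k)}\le C s^{1/2}N^{-p}s^{\,p-\frac12}\|f^{(p)}\|_{L^2(I_k)}+\mathcal O(\epsilon)
=CN^{-p}s^{p}\|f^{(p)}\|_{L^2(I_k)}+\mathcal O(\epsilon).
\]
Squaring and summing over $k$, the $L^2(I_k)$-norms add up exactly to $\|f^{(p)}\|^2_{L^2(a,b)}$. This gives
\[
\|f-P^\epsilon_{N,K}f\|_{L^2(a,b)}\le CN^{-p}s^p\|f^{(p)}\|_{L^2(a,b)}.
\]
Multiplying by $\sqrt{b-a}$ from Theorem~\ref{thm:main_bound} and substituting $s^p=(T/2\pi)^p(b-a)^pK^{-p}$ yields exactly
\[
C(T,p)(b-a)^{p+\frac12}N^{-p}K^{-p}\|f^{(p)}\|_{L^2(a,b)},
\]
with $C(T,p)=C\,(T/2\pi)^p$.

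The constant $C$ from the trigonometric approximation estimate on $\Lambda$ depends only on $T$ and $p$, because $\Lambda$ is the same reference interval for every window; this is what justifies writing $C(T,p)$. I will also state explicitly that the corollary suppresses the $\mathcal O(\epsilon)$ term, or equivalently that it is understood up to $\mathcal O(\epsilon)$ as in Theorem~\ref{thm:Hp}. The only delicate point is making sure that the $\ell^2$-summation rather than the max is used. This is the step I would flag: citing Theorem~\ref{thm:Hp} verbatim gives only $K^{-p+1/2}$.
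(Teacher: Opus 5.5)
Your argument is correct, but it is not the paper's route. The paper gives no separate proof and presents the corollary as a direct specialization of Theorem~\ref{thm:Hp}. As you observe, that specialization does not deliver the claim: inserting $s_k=\tfrac{T}{2\pi}\tfrac{b-a}{K}$ into the max-form bound and using $\max_k\|f^{(p)}\|_{L^2(I_k)}\le\|f^{(p)}\|_{L^2(a,b)}$ gives only $K^{-p+\frac12}$. Reopening the proof and combining the per-window bounds $CN^{-p}s^{p}\|f^{(p)}\|_{L^2(I_k)}$ in $\ell^2$ is what is actually needed. It gives the stated estimate with $C(T,p)=C(T/2\pi)^p$ and the dimensionally consistent factor $(b-a)^{p+\frac12}$. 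The cost is that Theorem~\ref{thm:Hp} can no longer be used as a black box.

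One correction to your diagnosis. The inequality $\|F\|_{L^2(a,b)}\le\sqrt{b-a}\,\max_k\|F\|_{L^2(I_k)}$ in that proof is not valid as written; the correct factor is $\sqrt{K}$. In addition, the factor $s_k^{1/2}$ is dropped when passing to the statement. These two slips compensate because $\sum_k s_k=\tfrac{T}{2\pi}(b-a)$, so the statement of Theorem~\ref{thm:Hp} survives with a $T$-dependent constant. The $\sqrt{K}$ is therefore lost in the round trip from the $\ell^2$ sum over windows to a maximum and back to the global seminorm, not in that single inequality. Your summation avoids this round trip entirely.

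Two minor points.
\begin{itemize}
\item The per-window regularization term is $s^{1/2}\mathcal{O}(\epsilon)$, not $\mathcal{O}(\epsilon)$, and you should keep that factor. By Minkowski it aggregates to $\sqrt{Ks}\,\mathcal{O}(\epsilon)=\sqrt{T(b-a)/(2\pi)}\,\mathcal{O}(\epsilon)$, uniformly in $K$. A bare $\mathcal{O}(\epsilon)$ per window would instead sum to $\sqrt{K}\,\mathcal{O}(\epsilon)$.
\item The substantive input of your proof is the per-window estimate in terms of the seminorm $\|g_k^{(p)}\|_{L^2(\Lambda)}$ alone, which you take from the paper. Standard Fourier-extension bounds carry lower-order terms that do not scale like $s^{p-\frac12}$. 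Your argument is therefore exactly as rigorous as that inherited estimate.
\end{itemize}
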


Corollary~\ref{cor:uniform_partition} makes explicit the gain achieved by
the local partition: for fixed $N$, reducing the window size through larger
$K$ improves the quadrature accuracy algebraically.
This observation also explains the effectiveness of the proposed method for
highly oscillatory and variable-frequency integrands, which will be
confirmed numerically in Section~\ref{SEC4}.

\section{Numerical implementation based on uniform sampling}\label{SEC3}

In this section we describe the practical implementation of the proposed
multi-interval Fourier extension quadrature under uniform sampling.
The implementation consists of an offline precomputation stage and an
online evaluation stage.
Because the same equispaced sampling pattern is used on every local window,
the core discrete Fourier extension matrix and its TSVD factors can be
precomputed once and then reused throughout the computation.
This is the key mechanism that makes the method efficient on uniformly
sampled data.

Throughout this work, for convenience in the window-based implementation and
the subsequent singularity correction procedure, we fix the Fourier extension
parameters as
\[
n=10,\qquad m=2n+1=21,\qquad T=6,\qquad \epsilon=10^{-15},
\]
where $\epsilon$ is chosen close to machine precision.
This parameter set provides a stable local reconstruction while keeping the
window size small enough for efficient sliding-window computations and local
singularity treatment.

\subsection{Matrix precomputation on each subinterval}

Let each subinterval $I_k=[a_{k-1},a_k]$ be sampled by $m$ equidistant nodes
\[
x_{k,i}=a_{k-1}+ih_k,\qquad
h_k=\frac{a_k-a_{k-1}}{m-1},\qquad i=0,1,\ldots,m-1.
\]
Under the scaling
\[
t_i=\frac{x_{k,i}-a_{k-1}}{s_k},\qquad
s_k=\frac{T}{2\pi}(a_k-a_{k-1}),
\]
the local discrete Fourier extension system can be written as
\[
\mathbf{F}_{n}\mathbf{c}_k\approx \mathbf{g}_k,
\]
where
\[
(\mathbf{F}_{n})_{i,\ell}
=
\frac{1}{\sqrt{L}}e^{i\ell t_i},
\qquad
\mathbf{g}_k=
\bigl(g_k(t_0),g_k(t_1),\ldots,g_k(t_{m-1})\bigr)^\top,
\qquad |\ell|\le n.
\]
Here $L=T(m-1)$ is chosen so that the reference sampling interval exactly
matches $[0,2\pi/T]$.

Since the nodes $\{t_i\}$ are equispaced and the parameters $(n,m,T)$ are
fixed, the matrix $\mathbf{F}_{n}$ is identical for all local windows.
Therefore its truncated singular value decomposition
\[
\mathbf{F}_{n}=\mathbf{U}\mathbf{\Sigma}\mathbf{V}^\ast
\]
can be computed once offline and reused everywhere.
Let
\[
\mathbf{\Sigma}_\epsilon^\dagger
=
\mathrm{diag}\bigl(\sigma_j^{-1}\mathbbm{1}_{\{\sigma_j>\epsilon\}}\bigr)
\]
denote the truncated pseudoinverse.
Then the TSVD coefficient vector is given by
\begin{equation}\label{eq:tsvd_coeff}
\mathbf{c}_k^\epsilon
=
\mathbf{V}\mathbf{\Sigma}_\epsilon^\dagger\mathbf{U}^\ast\mathbf{g}_k.
\end{equation}

\paragraph{Important implementation note.}
Although \eqref{eq:tsvd_coeff} is linear in $\mathbf{g}_k$, the computation
must be carried out in the TSVD order
\[
\mathbf{y}_k=\mathbf{U}^\ast\mathbf{g}_k,\qquad
\mathbf{z}_k=\mathbf{\Sigma}_\epsilon^\dagger\mathbf{y}_k,\qquad
\mathbf{c}_k^\epsilon=\mathbf{V}\mathbf{z}_k,
\]
rather than by explicitly forming the merged matrix
$\mathbf{V}\mathbf{\Sigma}_\epsilon^\dagger\mathbf{U}^\ast$.
This ordering is numerically more stable: the projection
$\mathbf{U}^\ast\mathbf{g}_k$ first produces a decaying modal representation,
and only afterwards are the retained singular directions rescaled.
In floating-point arithmetic, directly fusing the three factors may lead to
unnecessary amplification of roundoff errors.

\paragraph{Quadrature assembly.}
Let
\[
\boldsymbol{\omega}=(\omega_{-n},\ldots,\omega_n)^\top\in\mathbb{C}^{2n+1}
\]
be the vector of exact Fourier-mode integrals on the reference interval,
\[
\omega_\ell=\int_0^{2\pi/T}e^{i\ell t}\,dt.
\]
Then the contribution of the $k$-th local window is
\[
\mathcal{Q}_k^\epsilon(f)
=
s_k\,\boldsymbol{\omega}^\ast\mathbf{c}_k^\epsilon
=
s_k\,\boldsymbol{\omega}^\ast
\mathbf{V}\mathbf{\Sigma}_\epsilon^\dagger\mathbf{U}^\ast\mathbf{g}_k,
\]
and the global quadrature is obtained by summing all local contributions:
\[
\mathcal{Q}_{n,K}^\epsilon(f)=\sum_{k=1}^K \mathcal{Q}_k^\epsilon(f).
\]

\paragraph{Offline/online separation.}
The matrices $(\mathbf{U},\mathbf{\Sigma},\mathbf{V})$ depend only on the
fixed parameters $(n,m,T)$ and the reference equispaced sampling pattern.
Hence they can be precomputed once offline.
Online evaluation then consists of:
(i) assembling the local data vector $\mathbf{g}_k$,
(ii) multiplying by $\mathbf{U}^\ast$,
(iii) diagonal scaling by $\mathbf{\Sigma}_\epsilon^\dagger$,
(iv) multiplying by $\mathbf{V}$, and
(v) taking one inner product with $\boldsymbol{\omega}$.
This structure makes the implementation especially efficient when a large
number of windows must be processed.

\subsection{Uniform sampling strategy}

We now describe the global implementation on a uniform grid.
Let
\[
x_j=a+jh,\qquad j=0,1,\ldots,M,
\]
be the equidistant nodes on $[a,b]$, where
\[
h=\frac{b-a}{M}.
\]
Since $m=21$, each local window contains $21$ consecutive grid points,
and adjacent windows overlap by one endpoint.
Equivalently, the window shift is
\[
m-1=20.
\]

Depending on the total number of sampling nodes $M+1$, three cases arise.

\paragraph{Case 1: $M+1<21$.}
If fewer than $21$ nodes are available, the standard precomputed matrix
cannot be used directly.
In this case we form a smaller discrete Fourier extension matrix using all
available nodes and compute its TSVD once for that particular interval.
This corresponds to a single global local-FE reconstruction.

\paragraph{Case 2: $M+1=21$.}
If the total number of nodes equals $21$, the global interval itself forms
exactly one local window.
The precomputed reference matrix can then be reused directly, and the
integral is obtained from one local Fourier extension reconstruction.

\paragraph{Case 3: $M+1>21$.}
If more than $21$ nodes are available, we employ a sliding-window strategy
with window size $21$:
\[
\{x_0,\ldots,x_{20}\},\quad
\{x_{20},\ldots,x_{40}\},\quad
\{x_{40},\ldots,x_{60}\},\ \ldots
\]
Thus, each window contributes the integral over one block of length
$(m-1)h=20h$, while neighboring windows share the boundary node.

If the last block contains fewer than $21$ nodes, we borrow the necessary
number of points from the preceding part of the grid so that the final
window still contains exactly $21$ equispaced samples.
The window length therefore remains unchanged, so the same precomputed SVD
can still be reused.
The only difference is that, for the last window, the effective integration
range in the reference variable is no longer the full interval
$[0,2\pi/T]$, but a truncated subinterval
\[
[t_0,\,2\pi/T]\subset[0,2\pi/T],
\]
where $t_0$ is determined by the number of remaining subintervals.
Accordingly, the Fourier-mode integrals are replaced by
\[
\omega_\ell^{(\mathrm{tail})}
=
\int_{t_0}^{2\pi/T}e^{i\ell t}\,dt.
\]

\paragraph{Algorithmic structure.}
Under global uniform sampling with $M+1>21$, the quadrature is computed as
follows:
\begin{enumerate}
\item Partition the global grid into overlapping windows of size $21$;
\item For each window, assemble the local data vector $\mathbf{g}_k$;
\item Compute $\mathbf{c}_k^\epsilon$ via the stable TSVD pipeline
      $\mathbf{U}^\ast\rightarrow \mathbf{\Sigma}_\epsilon^\dagger\rightarrow \mathbf{V}$;
\item Evaluate the corresponding local integral analytically from the
      Fourier coefficients;
\item Sum all local contributions.
\end{enumerate}

This implementation preserves numerical stability while allowing all
SVD-related quantities to be precomputed once and reused globally.
Moreover, since the method works directly on equispaced nodes, it can be
fairly compared with standard composite rules on uniform grids.

\subsection{Singular point detection, subgrid localization and correction for continuous piecewise smooth integrands}
\label{subsec:edge_refined}

We now describe the correction procedure for continuous piecewise smooth
integrands whose singularities are associated with derivative
discontinuities.
The main difficulty is that, if a singular point lies inside a local window,
then the corresponding local Fourier extension problem is no longer fitted
to a single smooth branch, and the quadrature contribution of that window
may lose its high accuracy.
To overcome this difficulty, we detect the affected window, localize the
singular point, reconstruct one-sided smooth models, and replace only the
contaminated local contribution.

\paragraph{Step 1: detection of singularity-containing windows.}
Let
\[
I_k=[x_{k,0},x_{k,m-1}]
\]
denote the $k$-th sliding window and let ${\bf c}_k^\epsilon$ be the
associated TSVD coefficient vector.
We define the coefficient-energy indicator
\[
\eta_k=\|{\bf c}_k^\epsilon\|_2.
\]
On smooth windows $\eta_k$ remains moderate, while the presence of a
singular point inside $I_k$ causes strong amplification because the local
FE problem becomes more severely ill-conditioned.
Hence, windows with abnormally large $\eta_k$ are identified as candidates
containing singular points.

\paragraph{Step 2: localization of the singular cell.}
Assume that $I_{k_0}$ is detected.
To localize the singular point within this window, we consider all interior
splitting positions $i=1,\ldots,m-2$.
For each $i$, we form two overlapping candidate windows and compute the
corresponding coefficient vectors ${\bf cL}_i$ and ${\bf cR}_i$.
We then select
\begin{equation}
i_0
=
\arg\min_{1\le i\le m-2}
\Bigl(
\|{\bf cL}_i\|_2+\|{\bf cR}_i\|_2
\Bigr).
\label{eq:min_split}
\end{equation}
This determines the smallest grid cell containing the singular point.
Equivalently, if the corresponding global index is denoted by
$\texttt{location}$, then the singular point lies in
\[
[x_{\texttt{location}-1},\,x_{\texttt{location}}].
\]

\paragraph{Step 3: prediction of one-sided endpoint values.}
If the singular point does not coincide with a grid node, then each of the
two local windows used for the left and right reconstructions contains one
contaminated endpoint.
To recover one-sided smooth data, we use the smallest singular mode of the
discrete FE matrix
\[
{\bf A}={\bf U}\Sigma{\bf V}^\ast.
\]
Let ${\bf u}_{\min}$ be the left singular vector associated with the
smallest singular value $\sigma_{\min}$.
For a smooth window,
\[
{\bf f}={\bf A}{\bf c}+{\bf r},
\]
and hence
\[
{\bf u}_{\min}^\ast{\bf f}
=
\sigma_{\min}{\bf v}_{\min}^\ast{\bf c}
+
{\bf u}_{\min}^\ast{\bf r},
\qquad
|{\bf u}_{\min}^\ast{\bf f}|
\le
\sigma_{\min}\|{\bf c}\|_2+\|{\bf r}\|_2.
\]
Thus, for one-sided smooth data, the component along the smallest singular
direction should be negligible, which motivates the constraint
\[
{\bf u}_{\min}^\ast{\bf f}\approx 0.
\]

If one endpoint value is unknown, we write
\[
{\bf f}(\alpha)=\widetilde{\bf f}+\alpha{\bf e}_p,
\]
where $\widetilde{\bf f}$ contains the $m-1$ reliable samples.
Imposing
\[
{\bf u}_{\min}^\ast{\bf f}(\alpha)=0
\]
yields the linear predictor
\begin{equation}
\alpha
=
-\frac{{\bf u}_{\min}^\ast\widetilde{\bf f}}
       {{\bf u}_{\min}^\ast{\bf e}_p}.
\label{eq:endpoint_predictor}
\end{equation}
In practice, we use \eqref{eq:endpoint_predictor} to predict the left limit
at $x_{\texttt{location}}$ and the right limit at
$x_{\texttt{location}-1}$.
Numerically, the prediction is highly accurate and is sufficient for the
subsequent quadrature correction.

\paragraph{Step 4: subgrid singular point estimation.}
After replacing the contaminated endpoint values by their predicted
one-sided limits, we obtain two smooth local Fourier extension models,
denoted by $f_L(x)$ and $f_R(x)$, corresponding to the left and right
smooth branches.
The singular point $\xi$ is then estimated as the unique root of
\[
f_L(x)-f_R(x)=0
\]
inside the cell
\[
[x_{\texttt{location}-1},\,x_{\texttt{location}}].
\]
Since $f_L$ and $f_R$ are explicit local Fourier expansions, this root can
be computed efficiently by bisection or a safeguarded Newton iteration.
The resulting estimate $\widehat{\xi}$ provides subgrid accuracy.

\paragraph{Step 5: local integral replacement on the singular window.}
Let $I_k=[a_k,b_k]$ denote the original window identified as singular.
Instead of using its contaminated quadrature contribution, we replace it by
\begin{equation}
\int_{a_k}^{b_k}f(x)\,dx
=
\int_{a_k}^{\widehat{\xi}}f_L(x)\,dx
+
\int_{\widehat{\xi}}^{b_k}f_R(x)\,dx.
\label{eq:local_replacement}
\end{equation}
Each subintegral is evaluated analytically from the corresponding local
Fourier coefficients by using the continuous primitives of the Fourier
modes.
Since only the affected window is corrected, the global structure of the
uniform-sampling implementation is preserved, while the loss of accuracy
caused by the singular point is removed.

\subsection*{Algorithm: quadrature with singular point correction}

\begin{enumerate}
\item Compute the coefficient energies $\eta_k$ for all windows and detect
      singularity-containing candidates.
\item For each detected window, localize the grid cell containing the
      singular point via \eqref{eq:min_split}.
\item Predict the missing one-sided endpoint values from the smallest
      singular mode by \eqref{eq:endpoint_predictor}.
\item Construct the left and right local FE models and estimate the subgrid
      singular position $\widehat{\xi}$.
\item Replace the original contribution of the detected window by the
      piecewise integral \eqref{eq:local_replacement}.
\item Sum all corrected and uncorrected window contributions.
\end{enumerate}

After this correction, the effective local contributions are again computed
from smooth one-sided data, and the quadrature regains the high-order
accuracy predicted by the smooth-case theory.

\begin{rem}[Scope]
The above correction strategy is designed for continuous piecewise smooth
functions, in particular for singularities of low-regularity type such as
discontinuities in derivatives.
Jump discontinuities are not considered here, since their one-sided limits
cannot be recovered from pointwise samples alone without additional
information.
\end{rem}
\section{Numerical Experiments}\label{SEC4}

This section presents numerical experiments to assess the accuracy of the
proposed quadrature method based on local Fourier extension (LFE).
All tests are carried out under a unified discretization framework so that
the comparison with classical composite quadrature rules is performed on the
same set of equispaced nodes.

\subsection{Experimental setup}

\paragraph{Parameter configuration.}
Following the implementation described in Section~\ref{SEC3}, we fix
\[
n=10,\qquad m=2n+1=21,\qquad T=6,\qquad \epsilon=10^{-15}.
\]
Accordingly, each local window contains $21$ equidistant nodes, and adjacent
windows overlap by one endpoint, so that the window shift is $m-1=20$.
The corresponding discrete Fourier extension matrix and its truncated singular
value decomposition are precomputed once and reused throughout all experiments.

\paragraph{Global discretization.}
Let $[a,b]$ be the integration interval.
The global discretization consists of $M+1$ equidistant nodes
\[
x_j=a+jh,\qquad j=0,1,\ldots,M,\qquad h=\frac{b-a}{M}.
\]
Thus, the total number of function evaluations is $M+1$.
When $M$ is not divisible by $m-1=20$, the last local contribution is
computed by the tail-window strategy described in Section~\ref{SEC3}.

\paragraph{Comparison method.}
To evaluate the performance of the proposed method on uniform grids, we
compare it with the composite Simpson rule, which is a standard fourth-order
method for smooth integrands on equispaced nodes.
All methods use exactly the same sampling points.
To preserve the nominal fourth-order accuracy of Simpson's rule, we choose
$M$ to be even in all tests.

\paragraph{Error measurement.}
Since all test functions considered below admit closed-form integrals, the
reference value $I_{\mathrm{exact}}$ is known explicitly.
For all examples, we report the absolute error
\[
E=|Q-I_{\mathrm{exact}}|.
\]

\paragraph{Computational environment.}
All computations are performed in MATLAB 2016b on a Windows 10 platform
with 16\,GB memory and an Intel(R) Core(TM) i7 CPU.

\paragraph{Performance metric.}
The principal performance metric used in this section is the error decay with
respect to the total number of uniform subintervals $M$, equivalently the
total number of nodes $M+1$.

\subsection{Smooth test functions}

We first examine the performance of the proposed quadrature method for
globally smooth integrands. In this regime, the composite Simpson rule
attains its standard fourth-order convergence and therefore provides a
natural baseline for comparison.

\paragraph{Test functions.}
We consider the following analytic functions on non-symmetric intervals:
\begin{align*}
f_1(x)&=3x^2-e^{-x}-2\sin(2x),&x\in[0.1,1.5],\\
f_2(x)&=e^{x}\cos(3x)+\frac{x^2}{1+x},&x\in[0.2,1.3],\\
f_3(x)&=\frac{1}{1+x^2}+2\cos(\sin(2x))\cos(2x),&x\in[-0.1,1.4].
\end{align*}
These functions are analytic on their respective intervals and do not possess
special symmetry or periodicity that would favor any particular quadrature
formula.
The exact integral values are obtained from their closed-form antiderivatives.

\paragraph{Results and discussion.}
Figure~\ref{fig1} reports the error decay versus the number of uniform
subintervals $M$ for the three smooth test functions.
As expected, the composite Simpson rule exhibits fourth-order convergence
before entering the roundoff-dominated regime.
\begin{figure}[htbp]
	\begin{center}
		\subfigure[$f_1(x)$]{\resizebox*{6cm}{3cm}{\label{subfg11}\includegraphics{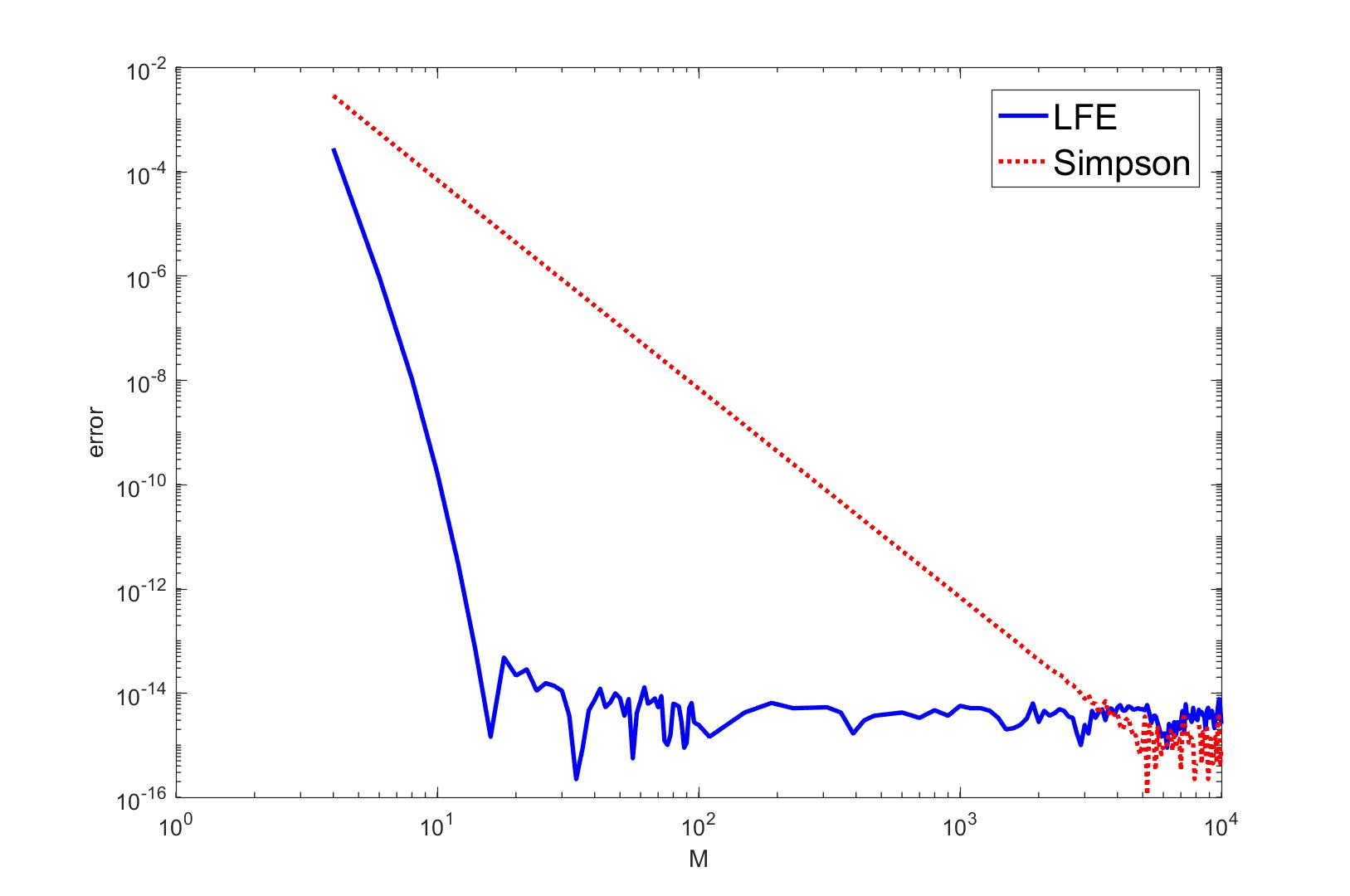}}}
		\subfigure[$f_2(x)$]{\resizebox*{6cm}{3cm}{\label{subfg12}\includegraphics{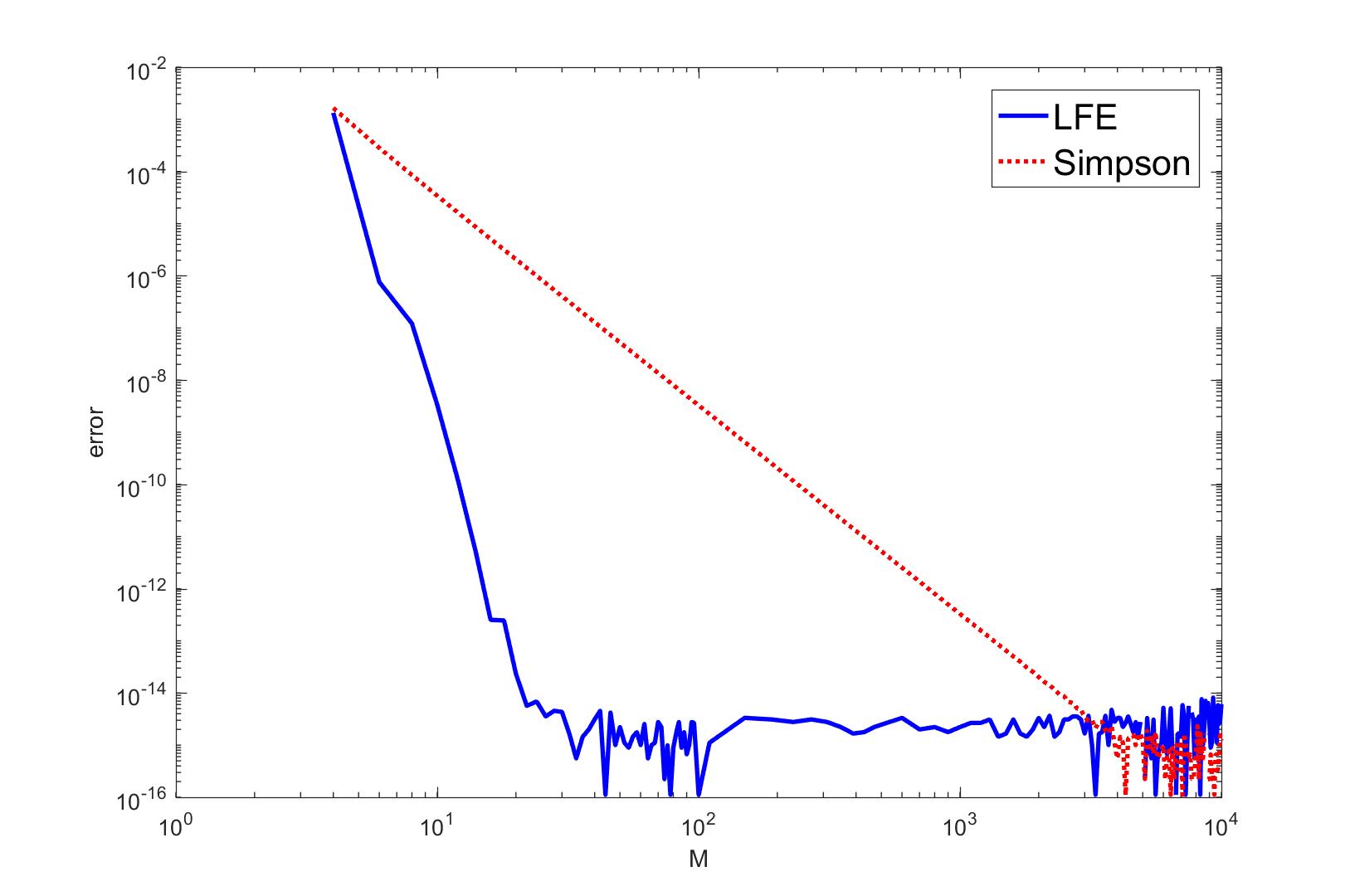}}}
        \subfigure[$f_3(x)$]{\resizebox*{6cm}{3cm}{\label{subfg13}\includegraphics{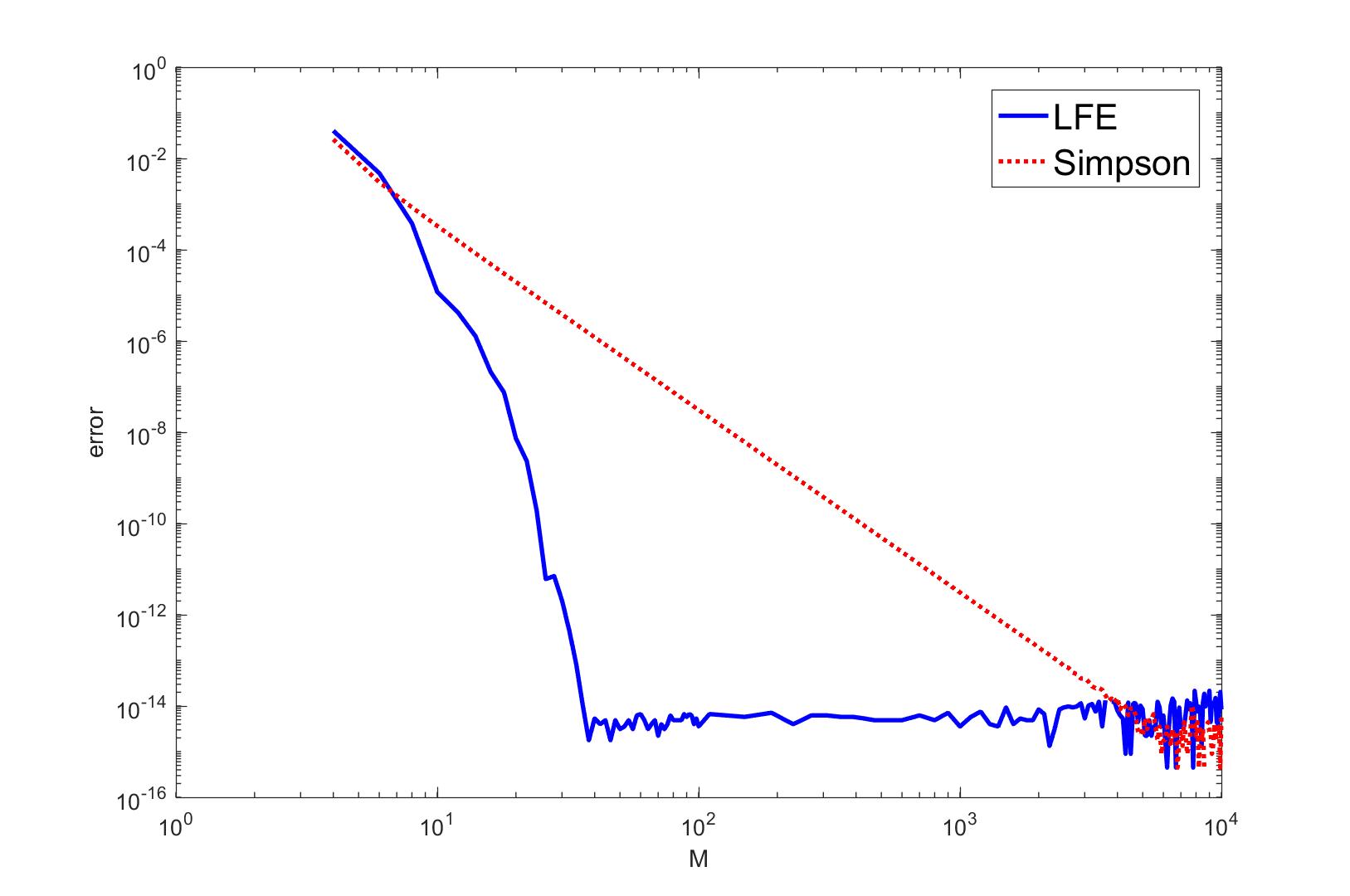}}}
		\caption{Error decay versus the total number of uniform subintervals $M$
        for the smooth test functions $f_1$--$f_3$.}
        \label{fig1}
	\end{center}
\end{figure}
In contrast, the proposed LFE quadrature reduces the error much more rapidly
for all three tests and reaches a near-machine-precision plateau with
substantially fewer nodes.
This advantage is quantified in Table~\ref{tab1}.
For example, to achieve $E\le 10^{-12}$, the proposed method requires only
$M=14,16,32$ for $f_1,f_2,f_3$, respectively, while the composite Simpson
rule requires $M=912,758,1324$.
Similar savings are observed for the target levels $10^{-8}$ and $10^{-10}$.

These experiments show that, even on globally smooth functions, the proposed
LFE-based quadrature provides a substantial gain in accuracy on uniform grids.

\begin{table}[htbp]
	\begin{center}
		\caption{Minimal number of uniform subintervals $M$ required to achieve a prescribed
        error level for the smooth test functions $f_1$--$f_3$.}
        \label{tab1}
		\small\tabcolsep 0.4pt
		{\begin{tabular*}{\textwidth}{@{\extracolsep\fill}ccccccccccccc}
				\toprule
				\multirow{2}{*}{$E$}
				&\multicolumn{2}{c}{$f_1(x)$}
				&\multicolumn{2}{c}{$f_2(x)$}
                &\multicolumn{2}{c}{$f_3(x)$}\\
				\cline{2-3} \cline{4-5}\cline{6-7}
				&  LFE&Simpson&  LFE&Simpson&  LFE&Simpson\\
				\midrule
				1e-8&10&92&10&76&20&134\\
                1e-10&12&288&14&240&26&420\\
                1e-12&14&912&16&758&32&1324\\
				\bottomrule
		\end{tabular*}}
	\end{center}
\end{table}

\subsection{Oscillatory and variable-frequency test functions}

We next examine oscillatory and variable-frequency integrands.
All implementation details follow the experimental setting described above.

\paragraph{Test functions.}
We consider the following three representative examples:
\begin{align*}
f_4(x)&=e^{-x}\sin(\omega x), &&x\in[0,1.1],\\
f_5(x)&=-2\kappa x\sin(\kappa x^2), &&x\in[0.2,1.3],\\
f_6(x)&=\frac{2x}{(1+\alpha-x^2)^2}, &&x\in[0,1].
\end{align*}
Their exact primitives are
\[
\int f_4(x)\,dx
=
\frac{e^{-x}\bigl(-\omega\cos(\omega x)-\sin(\omega x)\bigr)}
{1+\omega^2},
\qquad
\int f_5(x)\,dx=\cos(\kappa x^2),
\qquad
\int f_6(x)\,dx=\frac{1}{1+\alpha-x^2}.
\]
\begin{table}[htbp]
	\begin{center}
		\caption{Minimal number of uniform subintervals $M$ required to achieve prescribed
        error levels for the oscillatory and variable-frequency test functions
        $f_4$--$f_6$.}
        \label{tab:osc}
		\small\tabcolsep 0.4pt
		{\begin{tabular*}{\textwidth}{@{\extracolsep\fill}ccccccccccccc}
				\toprule
				\multirow{2}{*}{$E$}
				&\multicolumn{2}{c}{$f_4(x), \omega=100$}
				&\multicolumn{2}{c}{$f_4(x), \omega=200$}
                &\multicolumn{2}{c}{$f_5(x), \kappa=50$}
                &\multicolumn{2}{c}{$f_5(x), \kappa=100$}
				&\multicolumn{2}{c}{$f_6(x), \alpha=0.2$}
                &\multicolumn{2}{c}{$f_6(x), \alpha=0.1$}\\
				\cline{2-3} \cline{4-5}\cline{6-7}\cline{8-9}\cline{10-11}\cline{12-13}
				&  LFE&Simpson&  LFE&Simpson&  LFE&Simpson &  LFE&Simpson&  LFE&Simpson&  LFE&Simpson\\
				\midrule
				1e-8&154&1024&276&1448&228&3844&418&7364&100&948&228&2196\\
                1e-10&178&3232&296&3568&260&12148&478&23268&164&2964&340&6932\\
                1e-12&196&10204&392&14444&308&38340&592&73396&260&9364&500&21876\\
				\bottomrule
		\end{tabular*}}
	\end{center}
\end{table}

\paragraph{Uniform oscillation: $f_4$.}
Figures~\ref{fig:osc_f4}(a)--(b) display the error decay for $\omega=100$
and $\omega=200$.
The composite Simpson rule exhibits the expected fourth-order behavior, but
requires a large number of subintervals before reaching high accuracy.
The LFE quadrature achieves substantially smaller errors with many fewer
nodes.
For instance, to reach $E\le10^{-12}$, the LFE method requires
$M=196$ for $\omega=100$ and $M=392$ for $\omega=200$,
whereas Simpson's rule requires $M=10204$ and $M=14444$,
respectively; see Table~\ref{tab:osc}.

\paragraph{Quadratic-phase oscillation: $f_5$.}
The improvement becomes more pronounced for $f_5$, whose phase is quadratic.
Although the local oscillation rate grows with $x$, the local rescaling on
each window effectively suppresses the higher-order phase variation and makes
the oscillation more regular in local coordinates.
As a result, the global resolution requirement is greatly reduced.
To achieve $E\le10^{-12}$, the LFE method requires $M=308$ for $\kappa=50$
and $M=592$ for $\kappa=100$, whereas Simpson's rule requires
$M=38340$ and $M=73396$, respectively; see Table~\ref{tab:osc}.

\paragraph{Near-singular smooth case: $f_6$.}
The function $f_6$ remains smooth on $[0,1]$, but becomes increasingly
steep as $\alpha$ decreases.
Such behavior typically necessitates substantial mesh refinement for
polynomial-based composite rules.
The local Fourier representation captures this rapid variation much more
efficiently.
For example, when $\alpha=0.1$, the target accuracy $E\le10^{-12}$ is
achieved with $M=500$ for LFE, while Simpson's rule requires $M=21876$;
see Table~\ref{tab:osc}.

\paragraph{Discussion.}
These examples indicate that the proposed LFE quadrature is particularly
effective for oscillatory, variable-frequency, and rapidly varying smooth
integrands.
While it already outperforms the composite Simpson rule for uniformly
oscillatory functions, its advantage becomes significantly stronger when the
local frequency or local variation changes across the interval.

\begin{figure}[htbp]
	\begin{center}
		\subfigure[$\omega=100$]{\resizebox*{6cm}{3cm}{\label{subfg21}\includegraphics{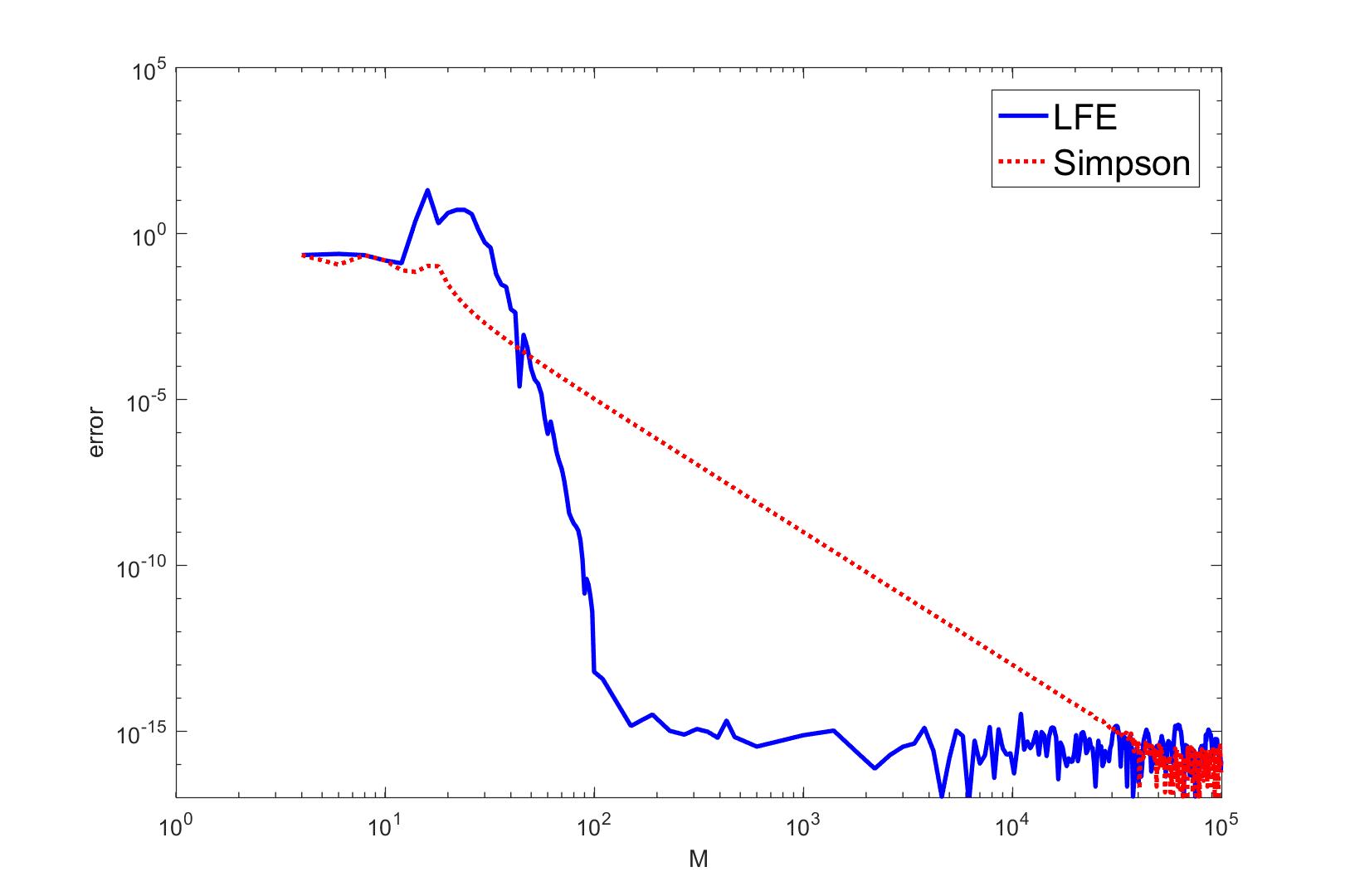}}}
		\subfigure[$\omega=200$]{\resizebox*{6cm}{3cm}{\label{subfg22}\includegraphics{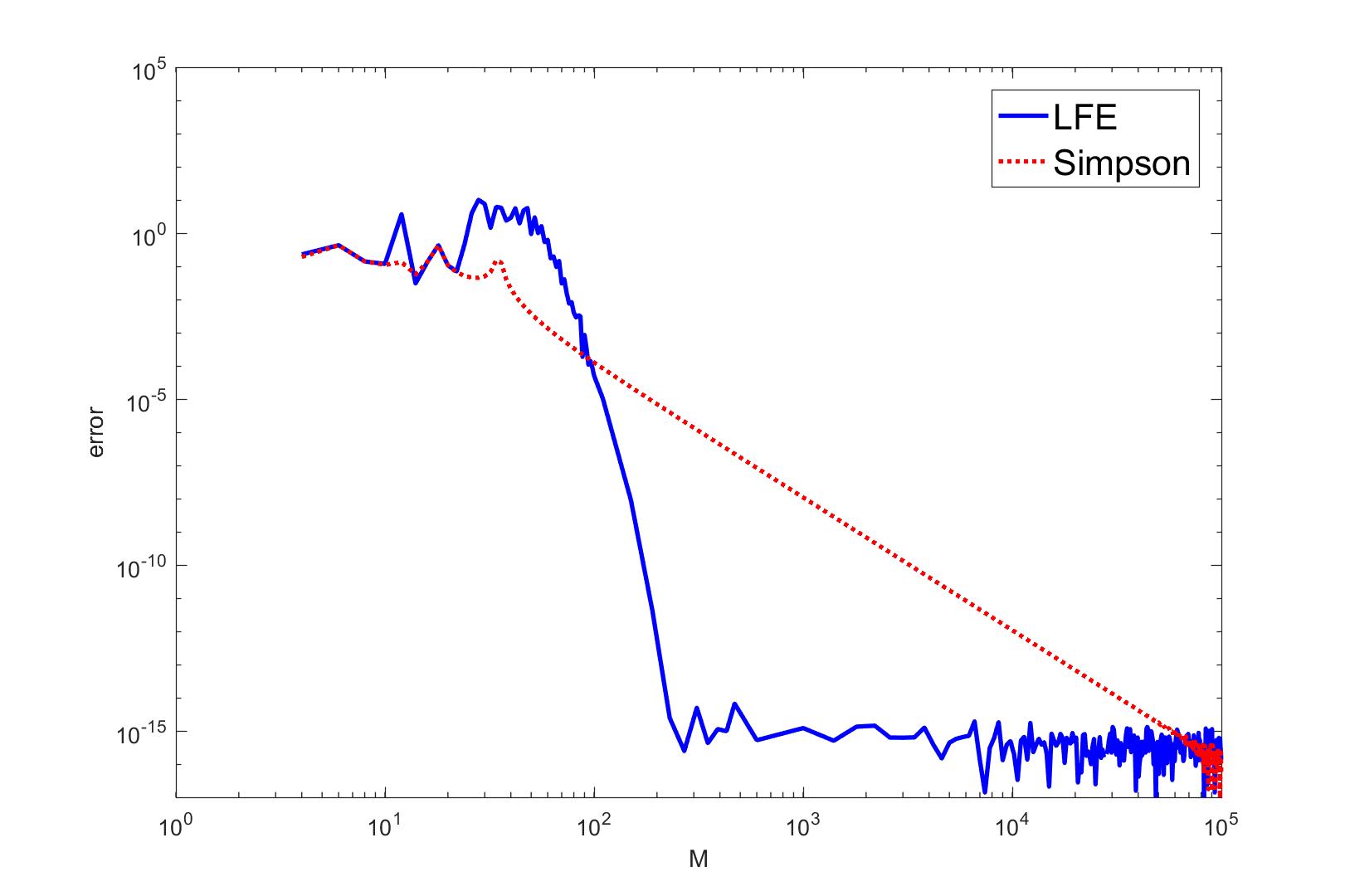}}}
		\caption{Error decay versus the total number of uniform subintervals $M$ for
        the oscillatory test function $f_4(x)=e^{-x}\sin(\omega x)$.}
        \label{fig:osc_f4}
	\end{center}
\end{figure}

\begin{figure}[htbp]
	\begin{center}
		\subfigure[$\kappa=50$]{\resizebox*{6cm}{3cm}{\label{subfg23}\includegraphics{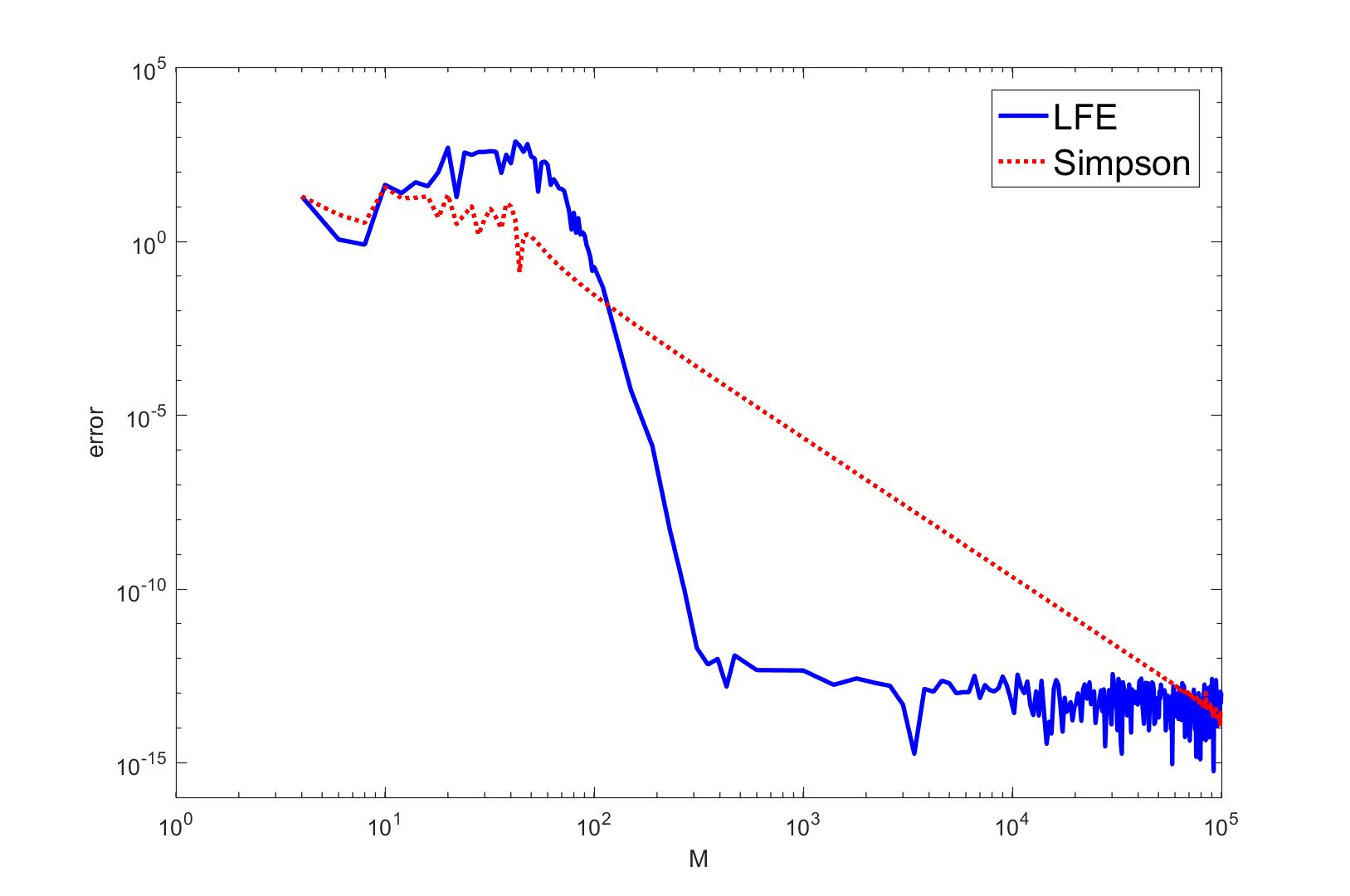}}}
		\subfigure[$\kappa=100$]{\resizebox*{6cm}{3cm}{\label{subfg24}\includegraphics{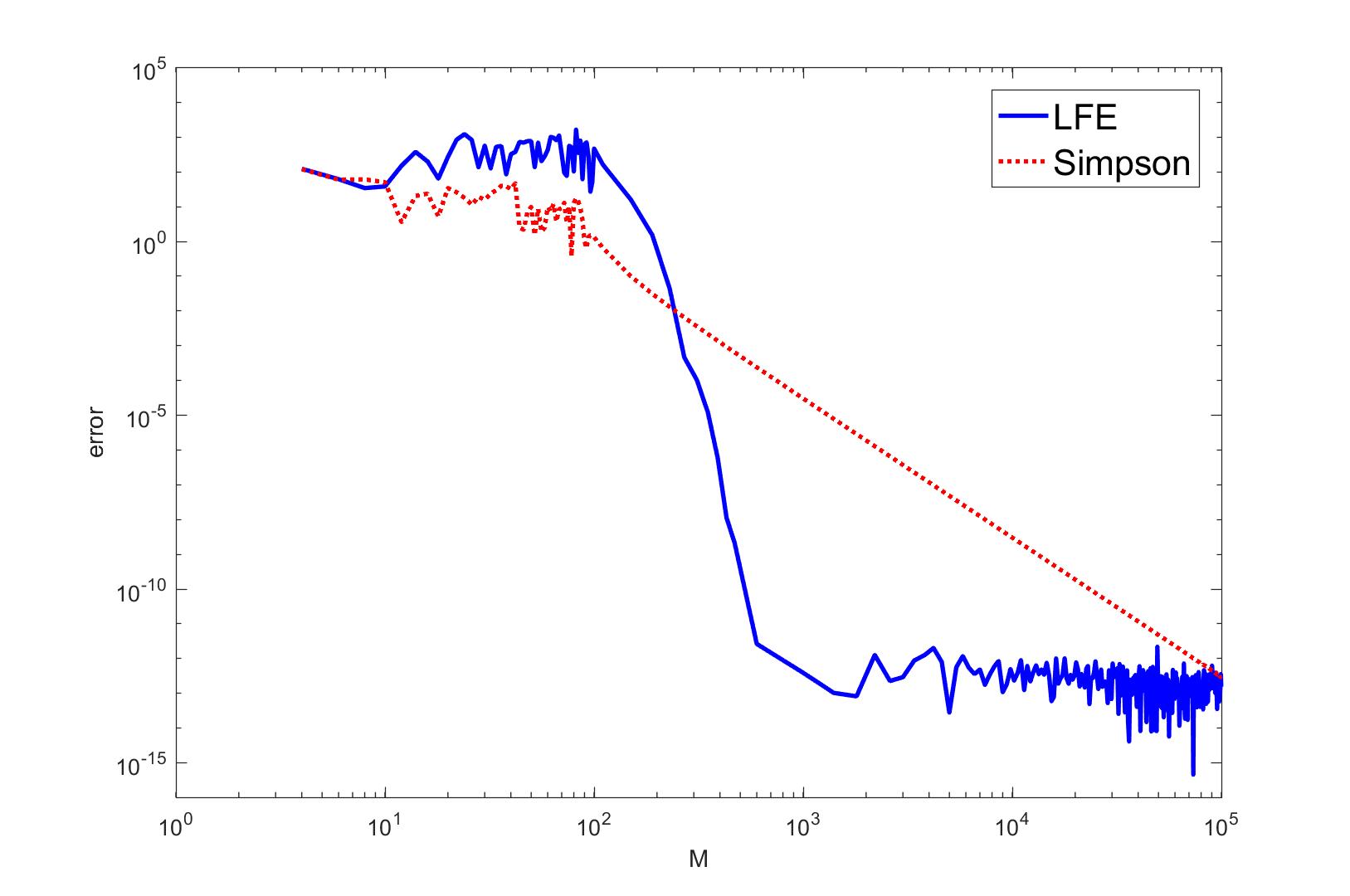}}}
		\caption{Error decay versus the total number of uniform subintervals $M$ for
        the quadratic-phase oscillatory test function $f_5(x)=-2\kappa x\sin(\kappa x^2)$.}
        \label{fig:osc_f5}
	\end{center}
\end{figure}

\begin{figure}[htbp]
	\begin{center}
		\subfigure[$\alpha=0.2$]{\resizebox*{6cm}{3cm}{\label{subfg25}\includegraphics{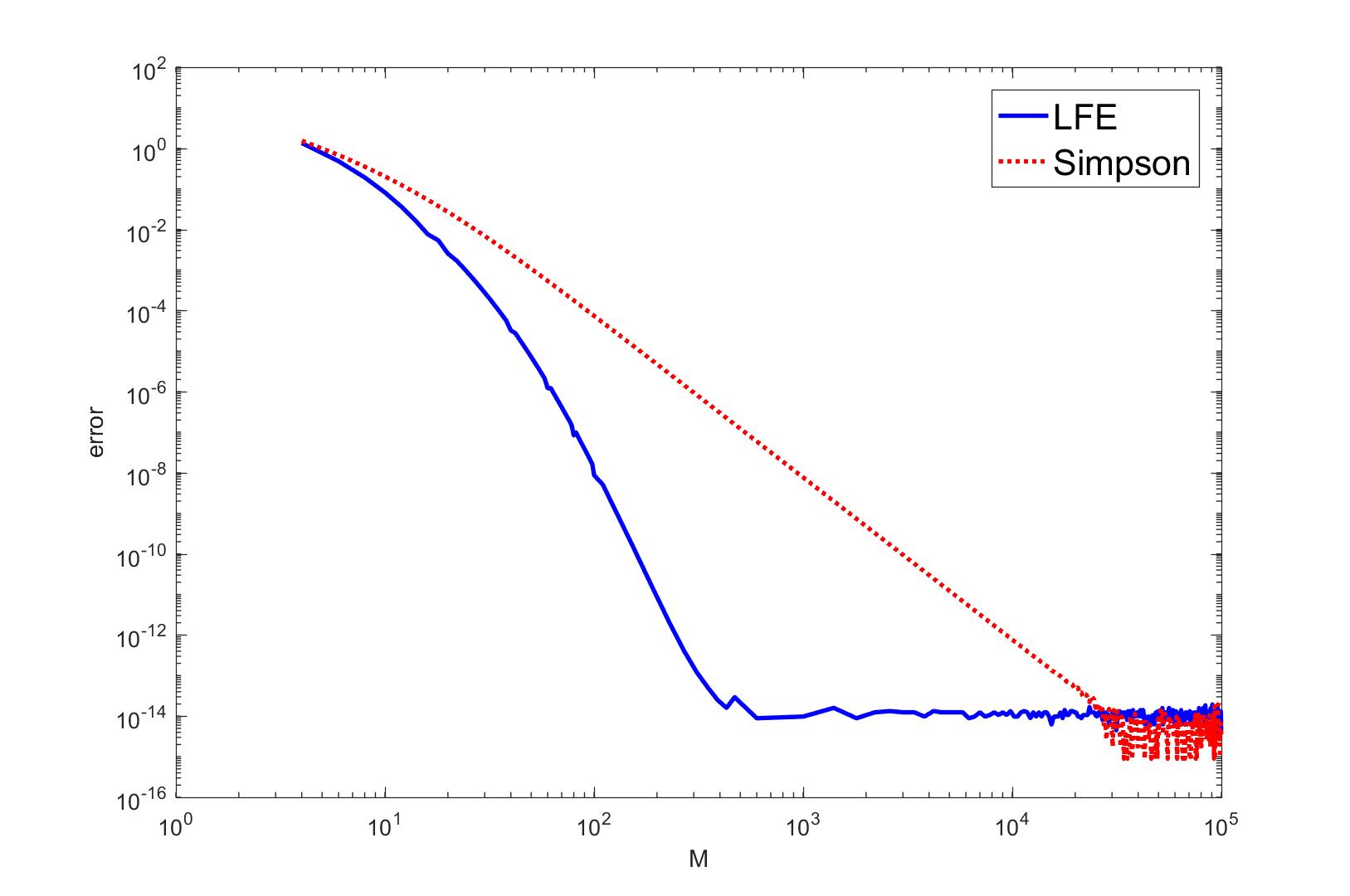}}}
		\subfigure[$\alpha=0.1$]{\resizebox*{6cm}{3cm}{\label{subfg26}\includegraphics{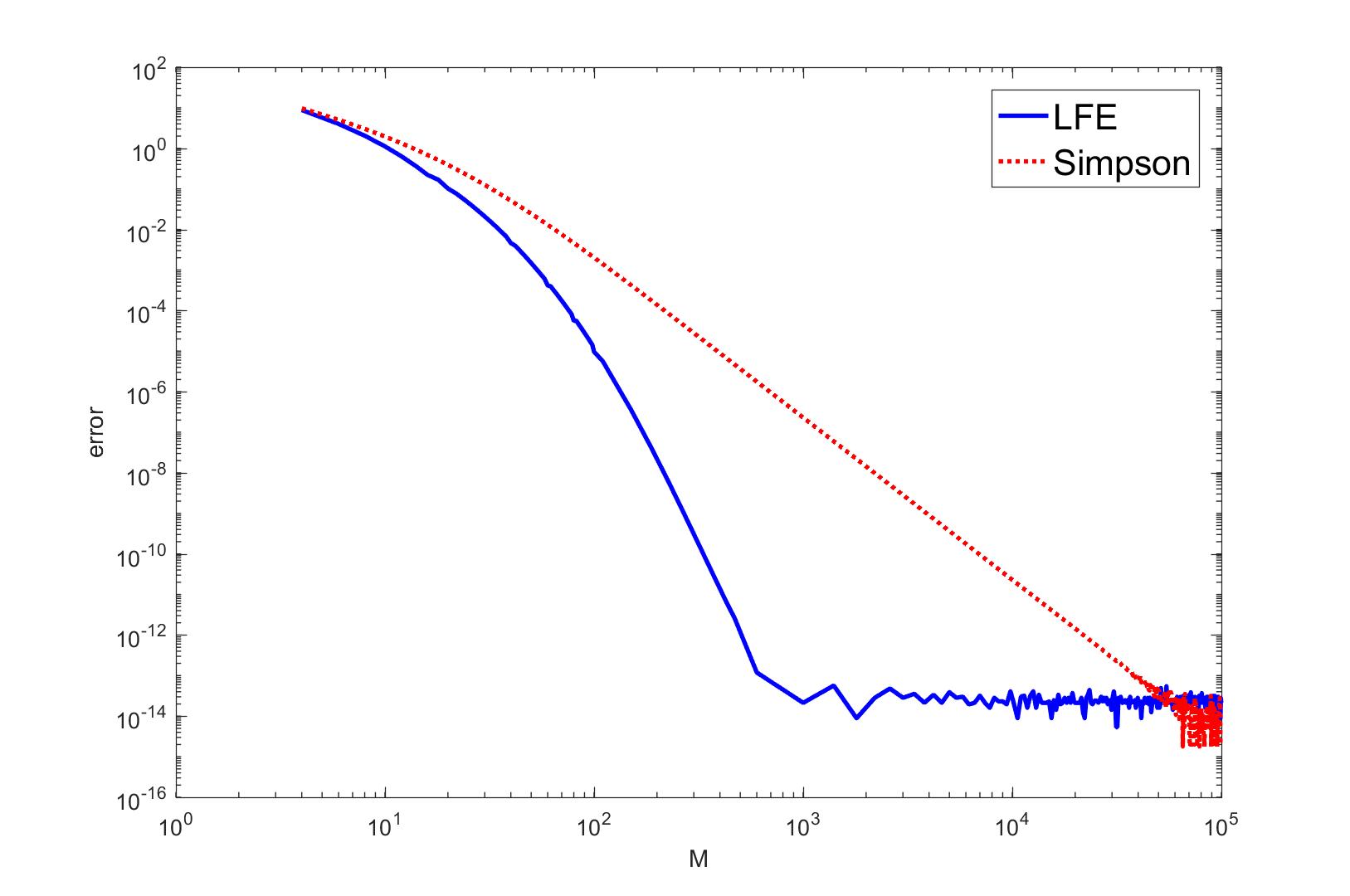}}}
		\caption{Error decay versus the total number of uniform subintervals $M$ for
        the rapidly varying smooth test function
        $f_6(x)=\frac{2x}{(1+\alpha-x^2)^2}$.}
        \label{fig:osc_f6}
	\end{center}
\end{figure}

\subsection{Piecewise smooth integrands}

We finally consider \emph{continuous piecewise smooth} integrands.
In this setting the function itself remains continuous, but its derivative
contains a singular point.
If such a singularity lies inside a local window, the local Fourier
extension approximation is perturbed and the corresponding coefficient
vector becomes abnormally large.
This provides a natural mechanism for singular-window detection.

\paragraph{Test functions.}
We consider two representative piecewise smooth functions
\[
f_7(x)=
\begin{cases}
\dfrac{1}{1+x^2}+\sin(5x), & 0\le x<\xi,\\[2mm]
\dfrac{1}{1+x^2}+\sin(5x)+x-\xi, & \xi\le x\le 1,
\end{cases}
\qquad \xi\in(0,1),
\]
with exact integral
\[
\int_0^1f_7(x)\,dx=\frac{\pi}{4}+\frac{1-\cos 5}{5}+\frac{1}{2}(1-\xi)^2,
\]
and
\[
f_8(x)=
\begin{cases}
e^{x}\cos(2x)+\dfrac{x}{1+x^2}, & 0\le x<\zeta,\\[2mm]
e^{x}\cos(2x)+\dfrac{x}{1+x^2}+(x-\zeta)^2, & \zeta\le x\le 1,
\end{cases}
\qquad \zeta\in(0,1),
\]
with exact integral
\[
\int_0^1f_8(x)\,dx
=
\frac{e\cos2+2\sin2-1}{5}
+\frac{\ln2}{2}
+\frac{(1-\zeta)^3}{3}.
\]

\paragraph{Detection of singularity-containing windows.}
For each local window $I_k$, we compute the coefficient-energy indicator
\[
\eta_k=\|{\bf c}_k^\epsilon\|_2.
\]
Figure~\ref{fig:eta_piecewise} shows the distribution of $\eta_k$ for
several choices of the singular point, with the total number of uniform
subintervals fixed at $M=160$.

Three situations are observed.
If the singular point coincides with a window endpoint
(e.g.\ $\xi=0.5$ or $\zeta=0.25$), then no window contains the singularity
and no abnormal spike appears.
If the singular point coincides with a sampling node but not with a window
endpoint (e.g.\ $\xi=0.3$ or $\zeta=0.6$), then the unique window
containing the singular point is clearly detected by a sharp increase of
$\eta_k$.
If the singular point does not coincide with any sampling node
(e.g.\ $\xi=\pi/5$ or $\zeta=0.73$), the singularity-containing window is
still identified reliably.
In all cases where a singular point lies inside some window, the indicator
$\eta_k$ shows a pronounced spike several orders of magnitude larger than the
values in smooth regions.

\paragraph{Localization of the singular point.}
Once a candidate window is detected, we further localize the singular point
inside that window by examining the coefficient norms
$\|{\bf cL}_i\|_2$ and $\|{\bf cR}_i\|_2$ introduced in
Section~\ref{subsec:edge_refined}.
Table~\ref{tab_loc_piecewise} reports their values for four representative
configurations.

When the singular point coincides with a sampling node
(e.g.\ $\xi=0.3$ and $\zeta=0.6$), both
$\|{\bf cL}_i\|_2$ and $\|{\bf cR}_i\|_2$ remain small near the correct
index.
In contrast, when the singular point lies strictly inside a grid cell
(e.g.\ $\xi=\pi/5$ and $\zeta=0.73$), one of the two quantities remains
moderate while the other becomes very large.
Therefore, the minimum of
\[
\|{\bf cL}_i\|_2+\|{\bf cR}_i\|_2
\]
correctly identifies the smallest grid interval containing the singular
point.

\paragraph{Effect of singularity correction.}
After locating the singular point, the contribution of the affected window is
replaced by the corrected piecewise LFE quadrature described in
Section~\ref{subsec:edge_refined}.
The resulting signed errors are listed in
Table~\ref{tab_piecewise_accuracy}.

Without correction, the presence of a singular point inside a window causes
a visible loss of accuracy, typically at the level of $10^{-4}$--$10^{-6}$
for $f_7$ and between $10^{-7}$ and $10^{-9}$ for the tested cases of
$f_8$.
After applying the singularity correction, however, the error is reduced to
the level of $10^{-15}$, that is, essentially machine precision.
Moreover, this corrected accuracy remains stable as $M$ increases.

These results show that the proposed detection and correction strategy
effectively removes the influence of local singularities and restores the
near-spectral accuracy of the LFE-based quadrature.

\begin{figure}[htbp]
\centering
\subfigure[$f_7,\ \xi=0.3$]{
\includegraphics[width=0.32\textwidth]{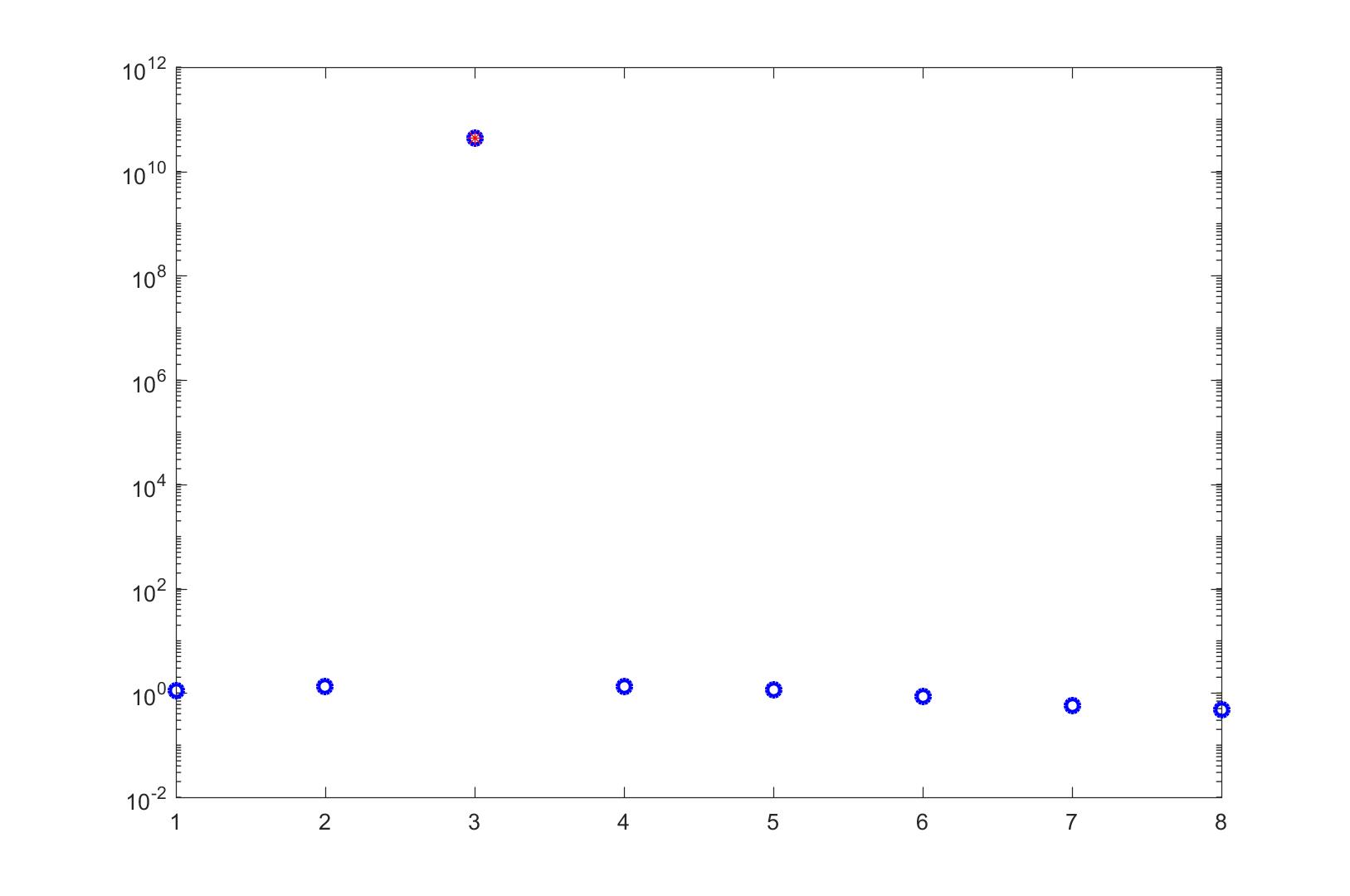}}
\subfigure[$f_7,\ \xi=0.5$]{
\includegraphics[width=0.32\textwidth]{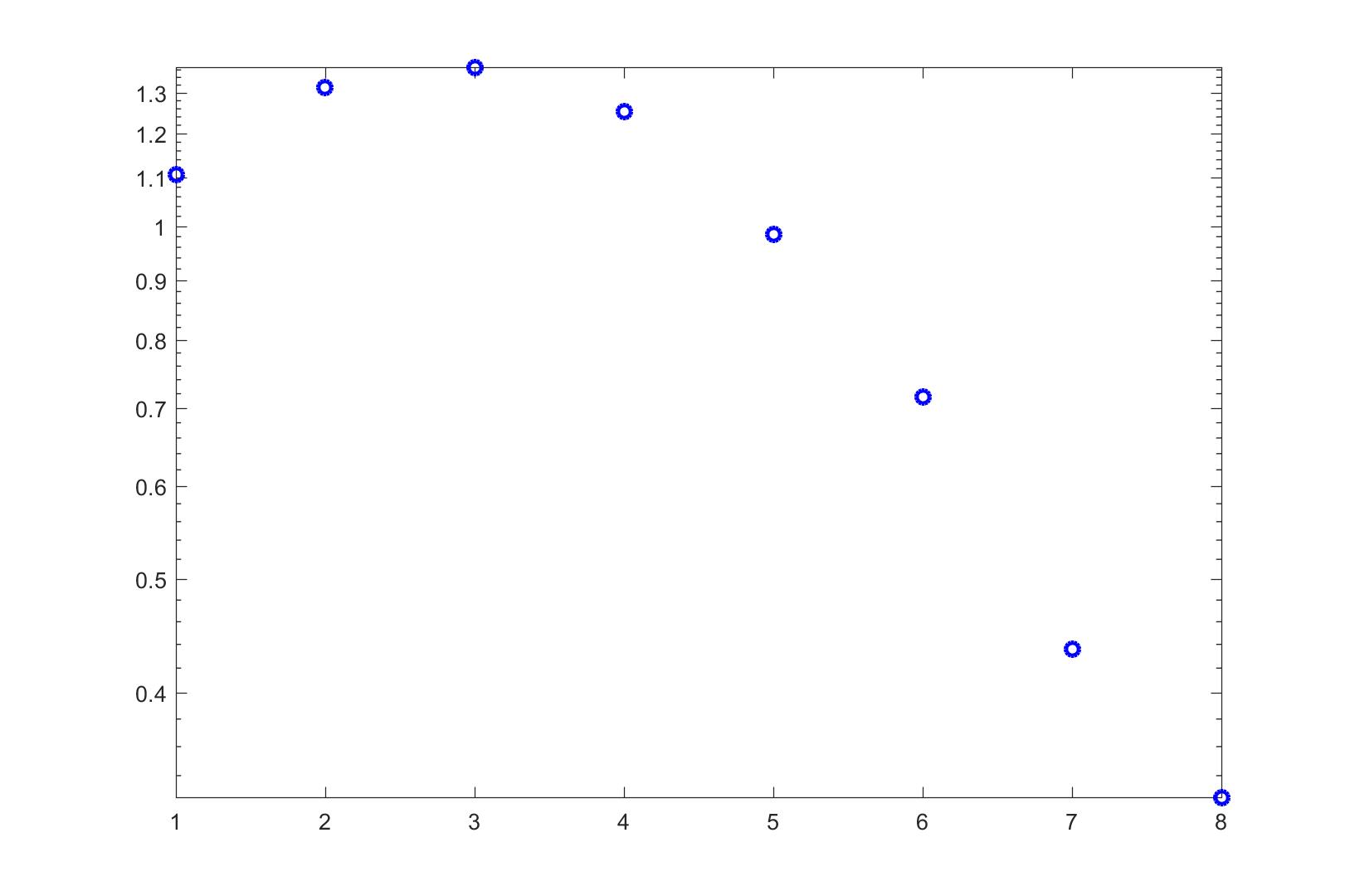}}
\subfigure[$f_7,\ \xi=\frac{\pi}{5}$]{
\includegraphics[width=0.32\textwidth]{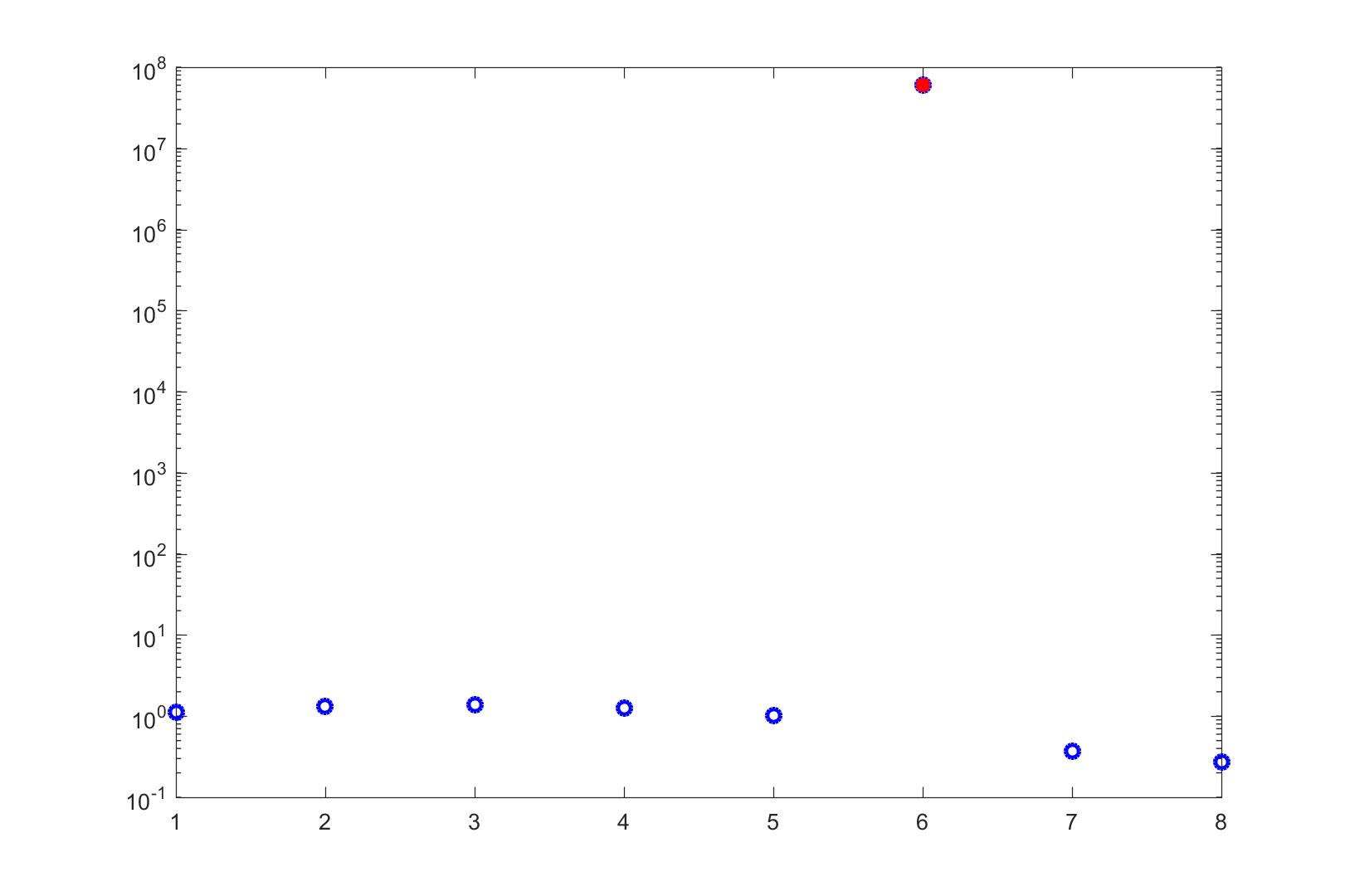}}

\subfigure[$f_8,\ \zeta=0.25$]{
\includegraphics[width=0.32\textwidth]{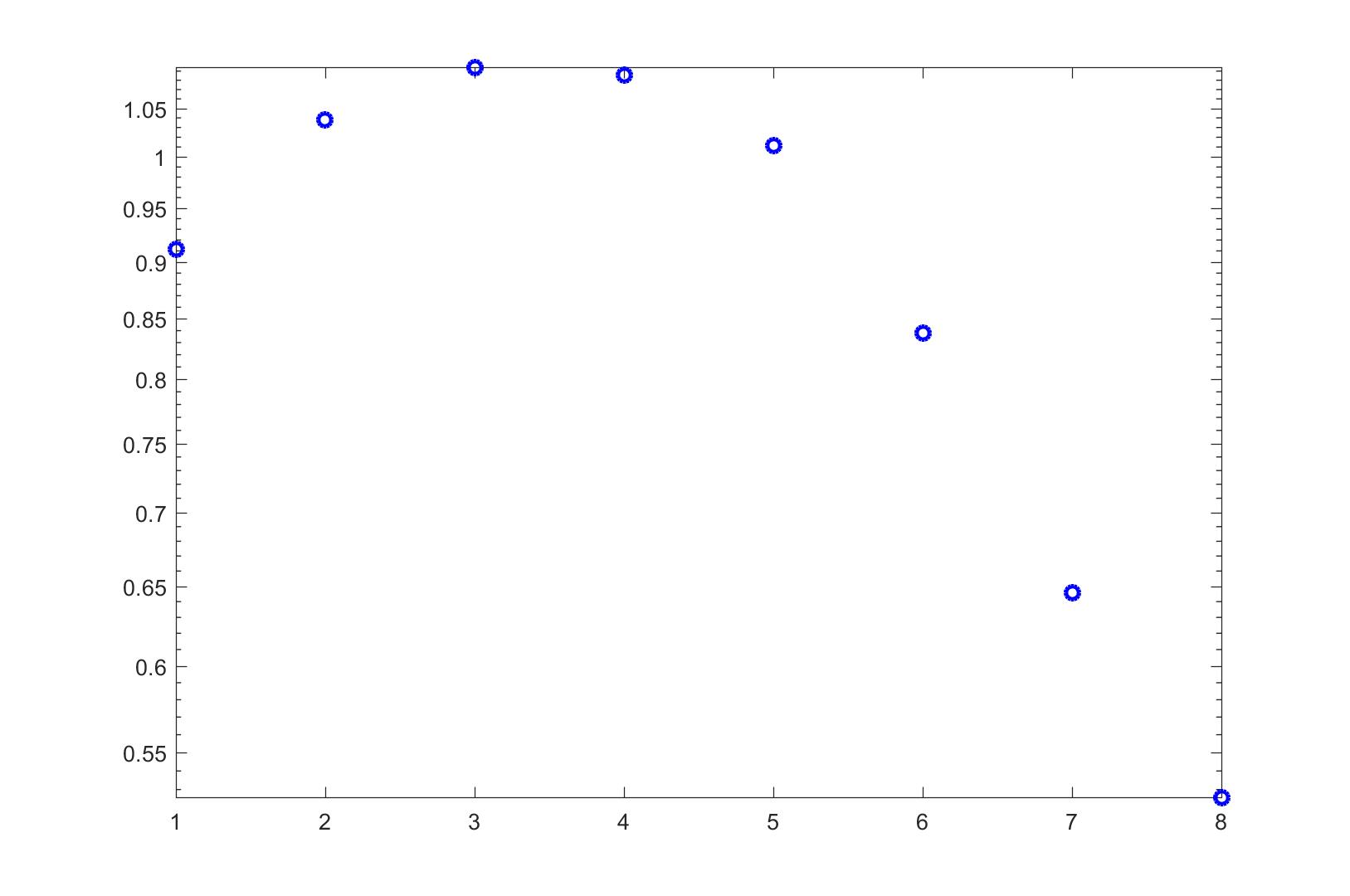}}
\subfigure[$f_8,\ \zeta=0.6$]{
\includegraphics[width=0.32\textwidth]{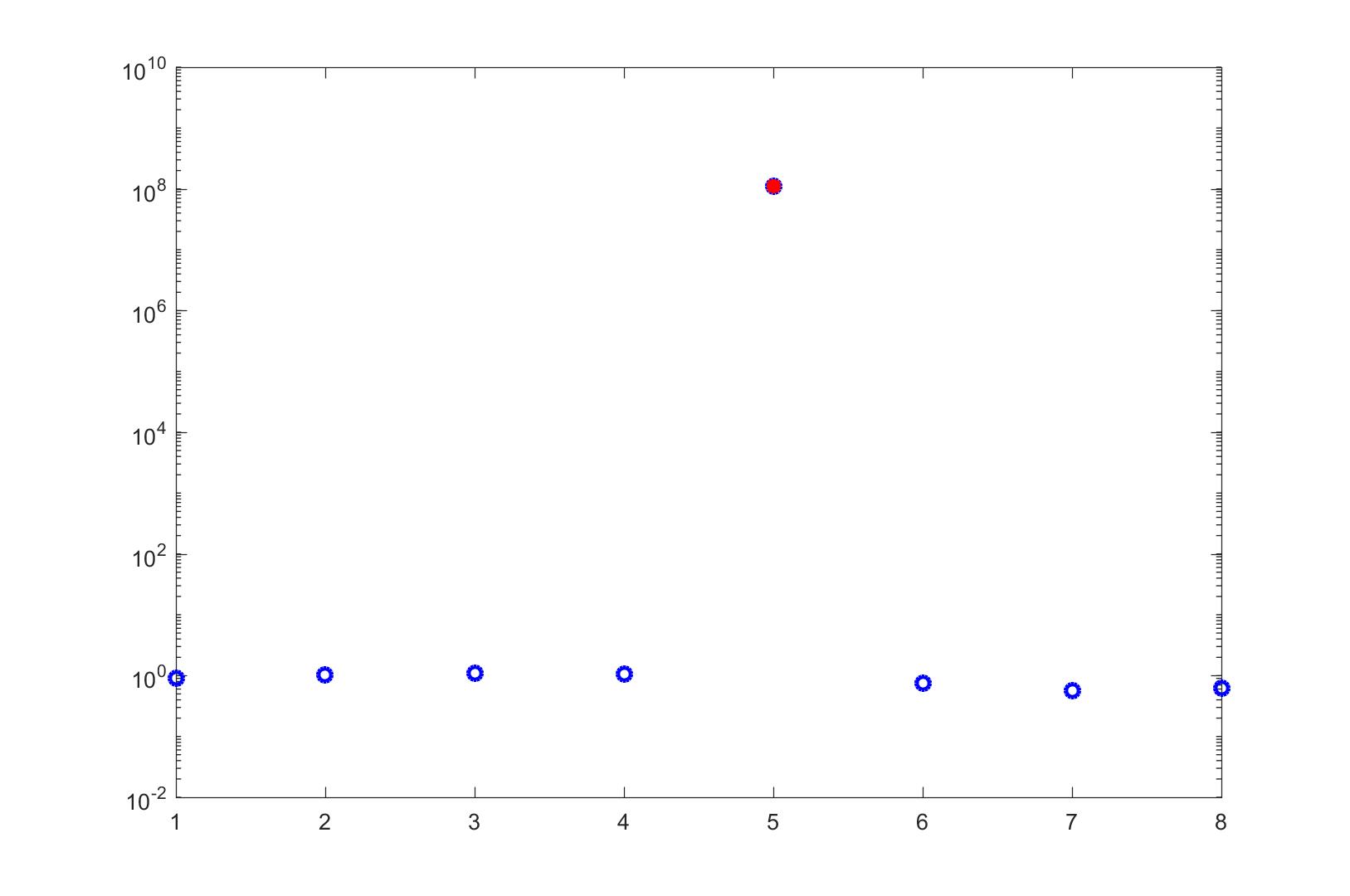}}
\subfigure[$f_8,\ \zeta=0.73$]{
\includegraphics[width=0.32\textwidth]{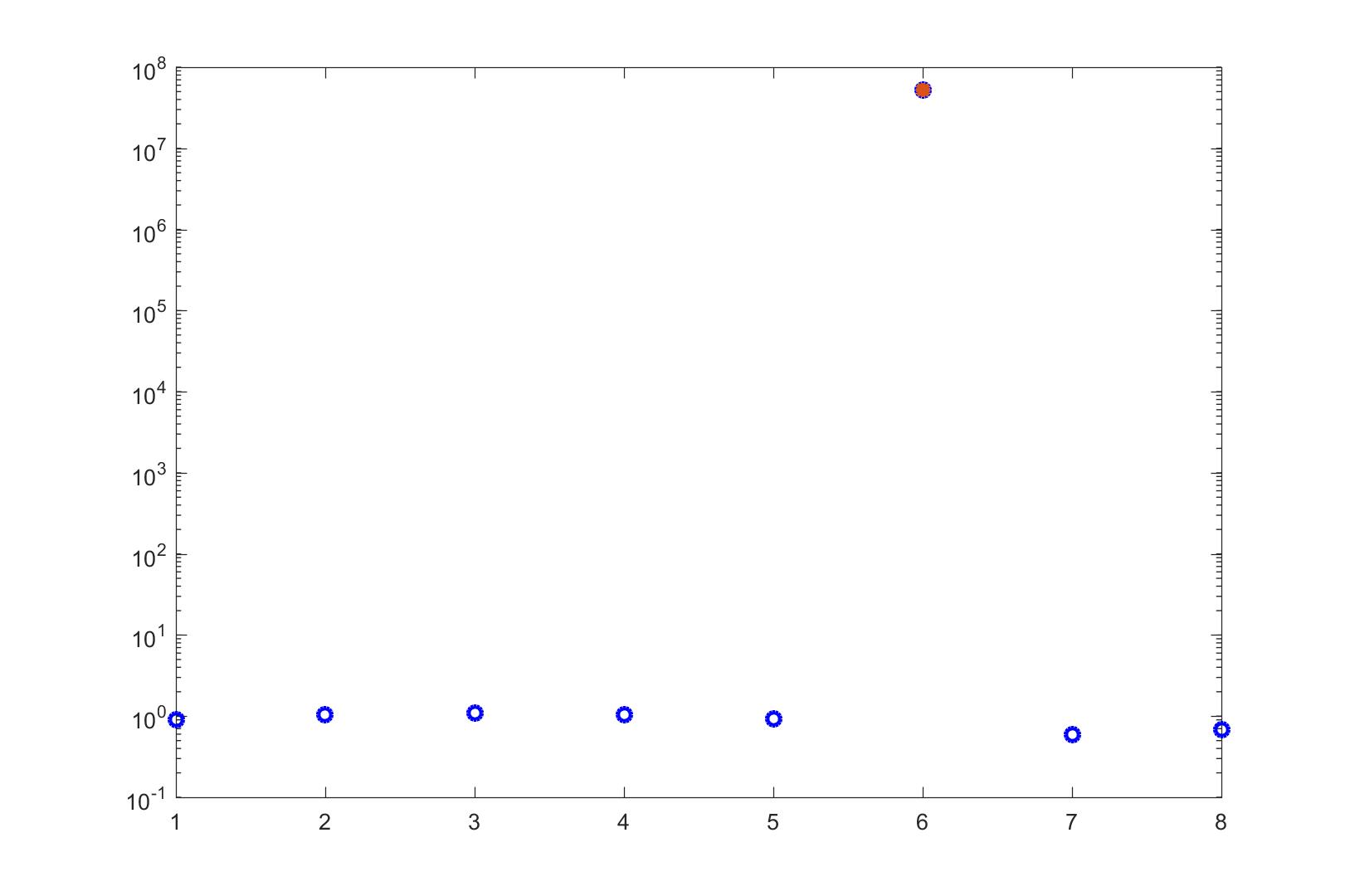}}

\caption{
Distribution of the coefficient energy indicator
$\eta_k=\|{\bf c}_k^\epsilon\|_2$ for the piecewise smooth test functions
$f_7$ and $f_8$ with $M=160$.
Each marker corresponds to one local window $I_k$.
When the singular point lies inside a window
(e.g.\ $\xi=0.3$, $\xi=\pi/5$, $\zeta=0.6$, $\zeta=0.73$),
the corresponding $\eta_k$ exhibits a pronounced spike,
indicating the presence of a singularity in that window.
When the singular point coincides with a window endpoint
(e.g.\ $\xi=0.5$ or $\zeta=0.25$),
no window contains the singularity and therefore the coefficient energy
remains uniformly bounded.
}
\label{fig:eta_piecewise}
\end{figure}

\begin{table}[htbp]
\centering
\caption{
Values of the coefficient norms $\|{\bf cL}_{i}\|_2$ and
$\|{\bf cR}_{i}\|_2$ used for singular point localization inside the
detected window.
The index $i$ denotes the candidate splitting position.
When the singular point coincides with a sampling node
(e.g.\ $\xi=0.3$ and $\zeta=0.6$), both $\|{\bf cL}_i\|_2$ and
$\|{\bf cR}_i\|_2$ remain small near the correct index.
When the singular point lies strictly inside a grid cell
(e.g.\ $\xi=\pi/5$ and $\zeta=0.73$), one of the two quantities remains
moderate while the other becomes extremely large.
Consequently, the minimum of
$\|{\bf cL}_i\|_2+\|{\bf cR}_i\|_2$
correctly identifies the smallest interval containing the singular point.
}
\label{tab_loc_piecewise}
\small
{\begin{tabular*}{\textwidth}{@{\extracolsep\fill}ccccccccccccc}
\toprule
 & \multicolumn{2}{c}{$f_7,\xi=0.3$}  &\multicolumn{2}{c}{$f_7,\xi=\frac{\pi}{5}$} & \multicolumn{2}{c}{$f_8,\zeta=0.6$} & \multicolumn{2}{c}{$f_8,\zeta=0.73$}\\
\cline{2-3} \cline{4-5} \cline{6-7} \cline{8-9}
$i$ & $\|{\bf cL}_{i}\|_2$ & $\|{\bf cR}_{i}\|_2$ & $\|{\bf cL}_{i}\|_2$ & $\|{\bf cR}_{i}\|_2$& $\|{\bf cL}_{i}\|_2$ & $\|{\bf cR}_{i}\|_2$& $\|{\bf cL}_{i}\|_2$ & $\|{\bf cR}_{i}\|_2$\\
\midrule
1&1.18e1&8.13e10&{\color{red}\it 3.46e7}& {\color{blue} \it 0.52e1}&1.02e1&1.20e8&0.89e1&1.07e8\\
2&1.22e1 &8.22e10&4.17e8&0.50e1&1.01e1&8.87e7&0.89e1&2.16e8\\
3&1.21e1 &5.150e10&2.24e9&0.48e1&1.01e1&6.96e7&0.88e1&2.59e8\\
4&1.23e1 & 2.16e10&6.99e9&0.47e1&1.00e1&2.48e8&0.87e1&1.21e8\\
5&1.25e1&5.98e9&1.30e10&0.45e&1.00e1&2.45e8&0.86e1&1.52e8\\
6&1.24e1&9.99e8&  1.23e10&0.43e&1.01e1&1.57e7& 0.86e1& 3.16e8\\
7&1.26e1&7.67e7&3.92e9&0.42e1&0.98e1&2.27e8& 0.84e1&1.87e8\\
8&{\color{blue}\it 1.27e1}&{\color{blue}\it 1.33e1}&2.93e10&0.40e1&0.99e1& 2.50e8&0.84e1&1.11e8\\
9&7.38e7&1.32e1&3.93e10&0.39e1&0.98e1& 8.27e7&0.82e1&2.93e8\\
10&9.72e8&1.34e1&1.68e10&0.37e1&0.97e1&7.73e7& 0.82e1&2.45e8\\
11&5.90e9&1.33e1& 2.11e10&0.36e1&0.97e1&1.15e8&0.81e1&9.49e7\\
12&2.16e10&1.33e1&3.87e10&0.34e1&0.97e1&7.17e7&0.79e1&4.78e6\\
13&5.23e10&1.33e1&2.32e10&0.33e1&0.95e1&2.59e7&0.79e1&2.46e7\\
14& 8.57e10&1.33e1&4.67e9&0.31e1&0.97e1&5.28e6&0.77e1&1.26e7\\
15&8.87e10&1.33e1&1.95e10&0.30e1&0.93e1&4.79e5&0.76e1&3.05e6\\
16&3.80e10&1.31e1&1.71e10&0.28e1&{\color{blue}\it 0.95e1}&{\color{blue}\it 0.76e1}&0.75e1&3.06e5\\
17&4.10e10&1.31e1&8.66e9&0.27e1&4.61e5&0.75e1&{\color{red} \it 1.84e4}& {\color{blue} \it 0.50e1}\\
18&8.23e10&1.31e1&2.71e9&0.25e1&5.16e6&0.74e1&3.84e5&0.49e1\\
19&4.82e10&1.31e1&4.94e8&0.25e1&2.56e7&0.73e1&5.87e6&0.47e1\\
\bottomrule
\end{tabular*}}
\end{table}

\begin{table}[htbp]
\centering
\caption{
Quadrature errors $E$ for the piecewise smooth
test functions $f_7$ and $f_8$ before and after applying the singularity
correction.
The column ``uncorrected'' corresponds to the original LFE quadrature,
while ``corrected'' denotes the result after detecting the singular window
and replacing its contribution by the piecewise LFE integration.
Without correction, the presence of a singular point inside a window causes
a noticeable loss of accuracy.
After applying the singularity correction, the errors are reduced to the
level of machine precision, showing that the proposed procedure restores the
high-order accuracy of the LFE quadrature.
}
\label{tab_piecewise_accuracy}
\small
{\begin{tabular*}{\textwidth}{@{\extracolsep\fill}ccccccccccccc}
\toprule
 & \multicolumn{2}{c}{$f_7,\xi=0.3$}  &\multicolumn{2}{c}{$f_7,\xi=\frac{\pi}{5}$} & \multicolumn{2}{c}{$f_8,\zeta=0.6$} & \multicolumn{2}{c}{$f_8,\zeta=0.73$}\\
\cline{2-3} \cline{4-5} \cline{6-7} \cline{8-9}
$M$ & uncorrected & corrected & uncorrected & corrected & uncorrected & corrected & uncorrected & corrected \\
\midrule
160&1.40e-4&2.89e-15&3.31e-7&4.21e-15&5.56e-7&2.22e-15&1.70e-6&2.44e-15\\
320&5.31e-5&3.77e-15&1.68e-6&2.88e-15&1.91e-7&2.33e-15&1.88e-7&4.44e-15\\
640&1.24e-5&3.11e-15&6.21e-7&5.55e-16&8.47e-9&2.33e-15&2.78e-8&5.66e-15\\
1280&3.25e-6&3.10e-15&9.17e-7&1.11e-15&2.95e-9&2.33e-15&2.52e-9&2.88e-15\\
\bottomrule
\end{tabular*}}
\end{table}

\subsection{A brief comparison with Clenshaw--Curtis quadrature}\label{SEC4_CC}

To further illustrate the behavior of the proposed method, we include a brief comparison with the standard Clenshaw--Curtis (CC) quadrature under a fixed sampling budget.
The CC rule is evaluated at its Chebyshev-type (nonuniform) nodes, while the proposed method is designed for integration from uniformly sampled data.
For clarity and reproducibility, we report the errors obtained with the same number of sampling points, rather than using adaptive refinement for CC.
In this comparison, we only include the test functions $f_4$, $f_5$, $f_7$, and $f_8$.
For the simpler cases $f_1$--$f_3$, both methods reach near machine precision with fewer than $30$ nodes, and thus the comparison is less informative.
To make the benchmark more representative, we slightly adjust several parameters (namely $\omega$, $\kappa$, $\xi$, and $\zeta$) in these selected functions, while keeping the integration interval unchanged.

Table~\ref{tab:cc_compare} reports the absolute errors of the proposed LFE-based quadrature and the CC rule for the above four functions.
For the highly oscillatory examples $f_4$ and $f_5$, CC achieves very high accuracy already at moderate sample sizes, whereas the proposed method requires a larger sampling budget to attain comparable precision.
In particular, for $f_4$ with $\omega=200$, CC is close to machine precision already at $M=256$, while the proposed method reaches machine precision at $M=512$.
For $f_5$ with $\kappa=100$, CC also converges rapidly, while the proposed method exhibits a slower decay for small $M$ but eventually attains high accuracy as $M$ increases.

For the non-smooth examples $f_7$ and $f_8$, which are continuous but have localized derivative singularities, the situation is different.
The convergence of CC becomes significantly slower: even at $M=1024$, the CC errors remain around $10^{-8}$ for $f_7$ and around $10^{-11}$ for $f_8$.
By contrast, the proposed method maintains near machine precision already at $M=128$ for both $f_7$ and $f_8$.
This clear gap reflects the main advantage of the proposed framework on uniform grids: the local detection-and-correction mechanism can effectively isolate the influence of localized nonsmoothness and correct only the affected contributions.

Overall, the above results indicate that the two methods have complementary strengths.
Clenshaw--Curtis quadrature is highly effective for globally smooth functions (including highly oscillatory ones) when nonuniform quadrature nodes are allowed.
In contrast, the proposed method is particularly advantageous for continuous piecewise smooth integrands with localized derivative singularities in the uniform-grid setting considered in this paper.

\begin{table}[htbp]
\centering
\caption{A comparison between the proposed LFE-based quadrature (LFE) and the Clenshaw--Curtis quadrature (CC) under the same sampling budget.
The table reports the absolute errors for $f_4$, $f_5$, $f_7$, and $f_8$ with parameters $\omega=200$, $\kappa=100$, $\xi=0.3$, and $\zeta=0.73$, respectively.
The integration interval is the same as in the previous experiments.}
\label{tab:cc_compare}
\small
{\begin{tabular*}{\textwidth}{@{\extracolsep\fill}ccccccccccccc}
\toprule
 & \multicolumn{2}{c}{$f_4,\omega=200$}  &\multicolumn{2}{c}{$f_5,\kappa=100$} & \multicolumn{2}{c}{$f_7,\xi=0.3$} & \multicolumn{2}{c}{$f_8,\zeta=0.73$}\\
\cline{2-3} \cline{4-5} \cline{6-7} \cline{8-9}
$M$ & LFE & CC & LFE & CC& LFE & CC& LFE & CC \\
\midrule
128&0.20e-1 &   1.80e-11&2.68e1&1.18e-10&2.22e-16&8.04e-7&3.10e-15&2.89e-9\\
256& 9.80e-8 &6.25e-16&2.02e-3&1.93e-13&1.93e-14&1.30e-6&7.77e-15&1.158e-9\\
512&2.71e-15 &5.10e-16&4.34e-11&4.73e-14&6.66e-15& 4.12e-7&4.99e-15&2.41e-10\\
1024&1.79e-16&1.28e-16& 2.82e-13&1.14e-13&1.99e-15&5.03e-8&2.44e-15& 4.03e-11\\
\bottomrule
\end{tabular*}}
\end{table}
\section{Conclusions and Remarks}\label{SEC5}

In this paper, we proposed a high-precision numerical quadrature method based on local Fourier extension (LFE).
The integrand is approximated on overlapping uniform windows by stabilized local FE expansions, and the resulting integrals are evaluated analytically from the local Fourier coefficients.
This establishes a direct link between local spectral reconstruction and numerical quadrature, yielding an integration framework that combines spectral accuracy with the practical convenience of uniform sampling.

\begin{itemize}
  \item \textbf{Uniform-grid setting.}
  A key feature is that the method works directly with equispaced samples.
  Unlike Gaussian-type quadratures that require specially distributed nodes, the proposed scheme can be implemented using uniformly sampled data, which is particularly attractive when function values are only available on fixed grids (e.g., from measurements or from numerical simulations).

  \item \textbf{Error control and accuracy.}
  We derived an error estimate showing that the quadrature error is controlled by the LFE approximation error, and hence the quadrature inherits the approximation properties of the local FE reconstruction.
  In particular, for smooth functions the method achieves high accuracy with substantially fewer nodes than classical composite rules on uniform grids.

  \item \textbf{Numerical validation and robustness.}
  The experiments confirm the above advantages.
  For globally smooth test functions, the proposed quadrature attains near-machine precision with significantly fewer nodes than the composite Simpson rule.
  For oscillatory and variable-frequency integrands, the advantage becomes more pronounced due to local rescaling on each window.
  For continuous piecewise smooth integrands, we further introduced a singular-point detection and correction strategy based on coefficient-energy indicators, local singularity localization, one-sided value prediction, and local integral replacement; this procedure effectively removes the deterioration caused by localized derivative singularities and restores near-spectral accuracy.

  \item \textbf{Adaptive extension.}
  Although we adopt uniform sampling throughout, the framework can be naturally combined with an \emph{a posteriori} local approximation indicator to design an adaptive sampling strategy, refining only where the local error is large.
  This may further reduce the total number of nodes for integrands with strongly nonuniform frequency distributions or localized rapid variation.
  We leave a systematic study of such adaptive refinements to future work.
\end{itemize}

Several directions deserve further investigation:
(i) developing a rigorous adaptive refinement criterion and analyzing its convergence;
(ii) extending the singularity treatment from continuous piecewise smooth functions to genuine jump discontinuities, which requires additional one-sided information;
and (iii) generalizing the current framework to multidimensional settings (e.g., tensor-product constructions) and to integrals arising in more general numerical PDE and inverse-problem applications.
\section*{References}

\end{document}